\newcommand{\tfa}{time-frequency analysis}
\newcommand{\fif}{if and only if}
\newcommand{\tfs}{time-frequency shift}
\newtheorem{tm}{Theorem}[section]    
\newtheorem{lemma}[tm]{Lemma}
\newtheorem{prop}[tm]{Proposition}
\newtheorem{cor}[tm]{Corollary}
\theoremstyle{definition}
\newtheorem{definition}{Definition}[section]
\newcommand{\beqa}{\begin{eqnarray*}}
\newcommand{\eeqa}{\end{eqnarray*}}
\DeclareMathOperator*{\supp}{supp}
\newcommand{\field}[1]{\mathbb{#1}}
\newcommand{\bR}{\field{R}}        
\newcommand{\bN}{\field{N}}        
\newcommand{\bC}{\field{C}}        
\def\cF{\mathcal{ F}}              
\def\cH{\mathcal{ H}}
\def\cG{\mathcal{ G}}
\def\cO{\mathcal{ O}}
\def\cP{\mathcal{ P}}
\def\rd{\bR^d}
\def\cd{\bC^d}
\def\<{\left<}
\def\>{\right>}
\def\inv{^{-1}}
\def\mv1{M_v^1}
\newcommand{\mz}{Marcinkie\-wicz-\-Zyg\-mund}
\newcommand{\pln}{\cP _n}
\newcommand{\fc}{_{\cF ^2}}
\begin{document}

\begin{abstract}
We study  the relation between Marcinkiewicz-Zygmund families  for 
polynomials in a weighted $L^2$-space and sampling  theorems  for entire 
functions in the Fock space and the dual relation between 
uniform interpolating families  for polynomials and interpolating
sequences.  As a consequence we obtain a description
of signal subspaces spanned by Hermite functions
by means of Gabor frames.  
\end{abstract}

\title[Marcinkiewicz-Zygmund Inequalities]{Marcinkiewicz-Zygmund
  Inequalities for Polynomials in Fock Space}
\author{Karlheinz Gr\"ochenig}
\address{Faculty of Mathematics \\
University of Vienna \\
Oskar-Morgenstern-Platz 1 \\
A-1090 Vienna, Austria}
\email{karlheinz.groechenig@univie.ac.at}

\author{Joaquim Ortega-Cerd\`a}
\address{Departament de Matem\`atiques i Inform\`atica \\
Universitat de  Barcelona \\
Gran Via de les Corts Catalanes, 585 \\
08007, Barcelona, Spain}
\email{jortega@ub.edu}

\subjclass[2010]{30E05,30H20,41A10,42B30}
\date{}
\keywords{Marcinkiewicz-Zygmund inequalities, Fock space,  reproducing
  kernel, Hermite function, incomplete gamma function}
\thanks{K.\ G.\ was
  supported in part by the  project P31887-N32  of the
Austrian Science Fund (FWF). J.O.C.\ has been partially supported by
the Generalitat de Catalunya (grant 2017 SGR 359) and the Spanish
Ministerio de Ciencia,  Innovaci\'on y Universidades (project
MTM2017-83499-P)}
 \maketitle

\section{Introduction}

We study sampling and interpolation  in Fock space and the relation to
sampling and interpolation of polynomials. 
The Fock space $\cF ^2$ consists of all entire  functions with finite norm
\begin{equation}
  \label{eq:1}
  \|f\|_{\cF^2} = \Big(  \int _{\bC } |f(z)|^2 \, e^{-\pi |z|^2} dm(z)
  \Big)^{1/2} \, , 
\end{equation}
where $dm(z) = dx dy $ is the Lebesgue measure on $\bC \simeq \bR ^2$. 

We denote by $\pln$ the holomorphic polynomials of degree at most $n$.
A sequence of (finite) subsets $\Lambda _n \subseteq \bC $ is called a \mz\
family for the $\pln $  in  
Fock space $\cF ^2$, if there
exist constants $A,B>0$,  such that for all $n$ large, $n\geq n_0$, 
\begin{equation}
  \label{eq:k1}
\qquad   A \|p\|_{\fc }^2 \leq \sum _{\lambda \in \Lambda _n} \frac{|p(\lambda
    )|^2}{k_n(\lambda,\lambda )} \leq B \|p\|_{\fc }^2  \qquad \text{
    for all } p\in \pln   \, .
\end{equation}
Here $k_n$ is the reproducing kernel of $\pln$, when  endowed with the  inner
product inherited from $\cF ^2$. 

This notion  corresponds to the standard definition of sampling in a reproducing kernel 
Hilbert space $\cH$. Let $k_\lambda (z) = k(z,\lambda )$ be the
reproducing kernel of $\cH $, i.e., $f(\lambda ) = \langle f,
k_\lambda \rangle _{\cH }$ at the point $\lambda $. Then a sequence
$\Lambda $ is a sampling set for $\cH 
$, if the  normalized reproducing kernels  
$\Big\{\frac{k(z,\lambda)}{\sqrt{k(\lambda,\lambda)}} : \lambda\in 
\Lambda  \Big\}$  constitute a frame for $\cH$. Equivalently, the sampling
inequality $A\|f\|_\cH ^2 \leq \sum _{\lambda \in \Lambda } |f(\lambda
)|^2 k(\lambda , \lambda )^{-1} \leq  B \|f\|_{\cH }^2$ holds for all
$f\in \cH $. 

 In the Fock space $\cF ^2$  the reproducing kernel  is $k(z,w) =
 e^{\pi z\overline{w}}$, and  a sequence 
$\Lambda \subseteq \bC$ is sampling in $\cF ^2$,  if and only if
\[
   A\|f\|_{\cF^2}\le  \sum _{\lambda \in \Lambda } 
   |f(\lambda)|^2 e^{-\pi |\lambda|^2} \leq B \|f\|_{\fc }^2  \qquad \text{
    for all } f\in \cF^2   \, .
\]

In this article we compare the notion of Marcinkiewicz-Zygmund families for $\pln$ 
with sampling sequences for the Fock space $\cF^2$. We will see that 
both notions 
are intimately connected. Roughly speaking, suitable finite sections
of a sampling set for $\cF ^2$ yield a \mz\ family for the
polynomials $\pln $ in $\cF ^2$, and suitable limits of a  \mz\ family
yield a sampling set for $\cF ^2 $.

A precise formulation is contained in our main  result. (See Section~5
for an explanation of weak limits.) 
  \begin{tm} \label{tmintro1}
(i)   Assume that $\Lambda \subseteq \bC $ is a sampling set for $\cF ^2
  $.  For $\tau >0$ set $\rho _n$ such that $\pi \rho _n^2 = n +
  \sqrt{n} \tau  $ and let $B_{\rho _n}$ be the centered disk of radius
  $\rho _n$. Then for  $\tau >0$ large enough,   the sets $\Lambda _n = 
  \Lambda \cap  B_{\rho_n} $  form a \mz\ family for $\pln $ in
  $\cF ^2$.

  (ii) Conversely, every weak limit of a \mz\ family $(\Lambda _n)$
  for $\pln $ in $\cF ^2 $ is a sampling set for $\cF ^2 $.  
\end{tm}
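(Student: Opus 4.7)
My plan is to exploit the explicit form of the reproducing kernel $k_n$ of $\pln$ inherited from $\cF^2$. Since the monomials $\{z^j\sqrt{\pi^j/j!}\}_{j=0}^n$ form an orthonormal basis of $\pln$, a direct calculation gives
\[
k_n(\lambda,\lambda)\,e^{-\pi|\lambda|^2} \;=\; e^{-\pi|\lambda|^2}\sum_{j=0}^n \frac{(\pi|\lambda|^2)^j}{j!} \;=\; Q(n+1,\pi|\lambda|^2),
\]
where $Q(s,x)=\Gamma(s,x)/\Gamma(s)$ is the regularized upper incomplete gamma function. The essential quantitative input is the central-limit asymptotic $Q(n+1,\,n+\sqrt{n}\,\tau)\to \Phi(-\tau)$, uniform in $\tau$ on compact sets, which says the ratio is uniformly close to $1$ on $\pi|\lambda|^2 \leq n-\sqrt{n}\tau$ and uniformly small on $\pi|\lambda|^2 \geq n+\sqrt{n}\tau$ once $\tau$ is large.

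For part (i), both inequalities reduce to the Fock sampling inequality for $\Lambda$ applied to $p \in \pln \subset \cF^2$. The upper bound is immediate: on $\Lambda_n$ we have $\pi|\lambda|^2 \leq \pi\rho_n^2 = n+\sqrt{n}\tau$, so by monotonicity of $Q$ in its second variable,
\[
\sum_{\lambda \in \Lambda_n} \frac{|p(\lambda)|^2}{k_n(\lambda,\lambda)} \;=\; \sum_{\lambda \in \Lambda_n}\frac{|p(\lambda)|^2 e^{-\pi|\lambda|^2}}{Q(n+1,\pi|\lambda|^2)} \;\leq\; \frac{B\,\|p\|_{\cF^2}^2}{Q(n+1,n+\sqrt{n}\tau)},
\]
and the prefactor is uniformly bounded in $n$ by the asymptotic above. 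For the lower bound, use $k_n(\lambda,\lambda) \leq e^{\pi|\lambda|^2}$ to pass to the Fock sampling weight and split
\[
\sum_{\Lambda_n}\frac{|p(\lambda)|^2}{k_n(\lambda,\lambda)} \;\geq\; A\|p\|_{\cF^2}^2 \;-\; \sum_{\lambda\in\Lambda,\,|\lambda|>\rho_n}|p(\lambda)|^2 e^{-\pi|\lambda|^2}.
\]
The tail is then controlled by a standard Plancherel--P\'olya-type localization (legitimate because sampling sets have bounded upper density) combined with the polynomial concentration estimate
\[
\int_{|z|>\rho}|p(z)|^2 e^{-\pi|z|^2}\,dm(z) \;\leq\; Q(n+1,\pi\rho^2)\,\|p\|_{\cF^2}^2,
\]
which falls out of the monomial expansion of $\|p\|^2$ and the monotonicity of $Q$ in its first argument. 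Choosing $\tau$ large pushes the tail below $A/2$.

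For part (ii), I argue by contradiction using the weak-limit formalism of Section~5. A weak limit $\Lambda^*$ of $(\Lambda_n)$ arises as $\Lambda_n - z_n \to \Lambda^*$ for a sequence $z_n$ with $|z_n|^2 \lesssim n/\pi$, so the zoom point lies well inside the polynomial concentration region. If $\Lambda^*$ were not sampling for $\cF^2$, one could find $f \in \cF^2$ with $\|f\|_{\cF^2}=1$ and $\sum_{\lambda \in \Lambda^*}|f(\lambda)|^2 e^{-\pi|\lambda|^2}$ arbitrarily small. Transplant $f$ via the Bargmann--Fock translation $\beta_{z_n}f(z) = f(z-z_n)\,e^{\pi\bar z_n z - \pi|z_n|^2/2}$, which is an isometry on $\cF^2$, and set $p_n = P_n\beta_{z_n}f \in \pln$. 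The concentration estimate yields $\|p_n\|_{\cF^2}\to 1$ and that $p_n$ agrees with $\beta_{z_n}f$ on $\Lambda_n$ up to a sampling-negligible error. Since $|\beta_{z_n}f(\lambda)|^2 e^{-\pi|\lambda|^2} = |f(\lambda-z_n)|^2 e^{-\pi|\lambda - z_n|^2}$, the MZ sum for $p_n$ converges to $\sum_{\lambda \in \Lambda^*}|f(\lambda)|^2 e^{-\pi|\lambda|^2}$, contradicting the MZ lower bound for $(\Lambda_n)$.

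The main technical obstacle in both parts is the transition annulus $\pi|\lambda|^2 \in [n-O(\sqrt n),\,n+O(\sqrt n)]$, the edge of the spectrum of the Toeplitz projection $P_n$, where $Q(n+1,\pi|\lambda|^2)$ changes abruptly from near $1$ to near $0$. In part (i) this forces the extra margin parameter $\tau$ and drives the tail analysis of the lower bound. In part (ii) it dictates the choice of translation points and demands a careful justification that $P_n\beta_{z_n}f$ is genuinely close to $\beta_{z_n}f$ both in $\cF^2$-norm and at each $\lambda\in\Lambda_n$ in the weighted pointwise sense.
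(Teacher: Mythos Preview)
Your treatment of part~(i) is correct and matches the paper's argument essentially line for line: the reproducing kernel identity $k_n(\lambda,\lambda)e^{-\pi|\lambda|^2}=Q(n+1,\pi|\lambda|^2)$, the monotonicity of $Q$ for the upper bound, and for the lower bound the replacement $k_n\leq k$, the split into the full sampling sum minus a tail, the Plancherel--P\'olya localization, and the concentration estimate $\int_{|z|>\rho}|p|^2e^{-\pi|z|^2}\,dm\leq Q(n+1,\pi\rho^2)\|p\|_{\cF^2}^2$ are exactly what the paper uses.

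Part~(ii) has a genuine gap. First, the weak limits in the paper's sense involve no translation: $\Lambda_n\to\Lambda$ simply means convergence on compact disks in the Hausdorff metric, so your introduction of centers $z_n$ and Bargmann--Fock shifts $\beta_{z_n}$ is extraneous. More seriously, your argument presupposes structural control on $\Lambda_n$ that you never establish. To pass from the MZ sum $\sum_{\lambda\in\Lambda_n}|p_n(\lambda)|^2/k_n(\lambda,\lambda)$ to $\sum_{\lambda\in\Lambda^*}|f(\lambda)|^2e^{-\pi|\lambda|^2}$ you must (a) know that $\Lambda_n$ restricted to the bulk is uniformly relatively separated (so that the weak limit $\Lambda^*$ is relatively separated and the limiting sum even makes sense), and (b) control the contribution of points $\lambda\in\Lambda_n$ in and beyond the transition annulus, where $\kappa_{n,\lambda}$ is \emph{not} close to $\kappa_\lambda$ and your claimed pointwise agreement of $P_nf$ with $f$ fails. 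The paper obtains both pieces of information by testing the MZ inequality on explicit polynomials---the monomial $z^n$ for the exterior count and the normalized kernels $\kappa_{n,z}$ for separation (Lemma~\ref{distr})---and then avoids the projection difficulty altogether by working with a \emph{fixed} polynomial $p\in\cP_N$, applying the MZ inequality for all $n\geq N$, and splitting the sum into a compact part (handled by weak convergence), a bulk tail (handled by the separation and the subharmonic estimate), and an exterior part (handled by the cardinality bound together with a reproducing-kernel ratio estimate). Your contradiction-via-projection route could in principle be made to work, but not without first proving the analogue of Lemma~\ref{distr}; as written, the sentence ``the MZ sum for $p_n$ converges to $\sum_{\lambda\in\Lambda^*}|f(\lambda)|^2e^{-\pi|\lambda|^2}$'' is the entire content of the theorem and is not justified.
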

A dual result establishes a similar relationship between interpolating
sets for $\cF^2 $ and uniformly interpolating sets for $\pln $. A set
$\Lambda $ is interpolating in $\cF ^2$, if for every sequence 
$(a_\lambda)_{\lambda \in \Lambda}\in \ell^2(\Lambda)$, there exists  $f\in \cF 
^2 $ such that $f(\lambda
) e^{-\pi |\lambda |^2/2} = a_\lambda $ for all $\lambda \in \Lambda $. Of course, for
polynomials of degree $n$ every set of $n+1$  points is
interpolating.  In analogy to the  definition of \mz\ families we call a family
of (finite) subsets $\Lambda _n\subseteq \bC $ a uniform interpolating
family, if there exists a constant $A >0$, such that for every sequence $a
= (a_\lambda)_{\lambda \in \Lambda _n } \in \ell
^2(\Lambda _n )$ there exists a polynomial $p\in \pln $ such that $p(\lambda
) k_n(\lambda,\lambda )^{-1/2} = a_\lambda $ for  $\lambda \in \Lambda _n$ with norm control
$\|p\|\fc ^2 \leq A \|a\|_2^2$.  
\begin{tm} \label{tmintro2}
(i)     Assume that $\Lambda \subseteq \bC $ is a  set of interpolation for $\cF ^2
    $.  For $\tau >0$ and $\epsilon >0$ set $\rho _n$, such that $\pi \rho _n^2 =
  n-\sqrt{n} (\sqrt{2\log n} + \tau) $. Then for every   $\tau >0$ large 
enough,   the sets $\Lambda _n = 
  \Lambda \cap  B_{\rho_n} $  form a uniform interpolating family for $\pln $ in
  $\cF ^2$.

  (ii) Conversely, every  weak limit of  a uniform interpolating
  family  $(\Lambda
  _n)$ is a set of interpolation for $\cF ^2 $. 
  \end{tm}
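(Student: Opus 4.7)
Uniform interpolation for $\pln$ on $\Lambda_n$ is equivalent to the normalized polynomial reproducing kernels $\tilde k_\mu^{(n)} := k_n(\cdot,\mu)/\sqrt{k_n(\mu,\mu)}$, $\mu \in \Lambda_n$, forming a Riesz sequence in $\pln$ with bounds independent of $n$; interpolation of $\Lambda$ in $\cF^2$ says $\{k_\mu/\sqrt{k(\mu,\mu)} : \mu \in \Lambda\}$ is a Riesz sequence in $\cF^2$ with constants $A \leq B$. Since $k_n(\cdot,\mu) = P_n k_\mu$ for $P_n$ the orthogonal projection of $\cF^2$ onto $\pln$, we have $\tilde k_\mu^{(n)} = P_n v_\mu$ with $v_\mu := k_\mu/\sqrt{k_n(\mu,\mu)}$, so part (i) amounts to transferring the Fock Riesz-sequence property across $P_n$ on the truncation $\Lambda_n$.

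\textbf{Part (i).} The crucial analytic input is the incomplete-gamma/Poisson tail bound: for $\mu \in B_{\rho_n}$ with the stated $\rho_n$, a Chernoff estimate on a Poisson variable with mean $\pi|\mu|^2$ yields
\[
\frac{k(\mu,\mu)}{k_n(\mu,\mu)} - 1 \;\leq\; \frac{C}{n}\, e^{-\tau^2/2},
\]
since the shift by $-\sqrt n(\sqrt{2\log n}+\tau)$ places $\pi|\mu|^2$ at least $(\sqrt{2\log n}+\tau)\sqrt n$ below $n$. The upper Riesz bound follows from $\|\sum c_\mu \tilde k_\mu^{(n)}\|^2 \leq \|\sum c_\mu v_\mu\|^2 \leq B \sum|c_\mu|^2 k/k_n \leq B(1+\varepsilon)\|c\|_2^2$. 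For the lower bound, set $g := \sum c_\mu v_\mu$, so $P_n g = \sum c_\mu \tilde k_\mu^{(n)}$. The Fock lower Riesz bound gives $\|g\|^2 \geq A \sum |c_\mu|^2 k/k_n \geq A\|c\|_2^2$. Expanding $g$ in the orthonormal monomial basis $\{(\pi^k/k!)^{1/2} z^k\}$ of $\cF^2$ and applying Cauchy--Schwarz to each coefficient yields
\[
\|(I - P_n) g\|_{\cF^2}^2 \;\leq\; \|c\|_2^2 \sum_{\mu \in \Lambda_n} \Big(\frac{k(\mu,\mu)}{k_n(\mu,\mu)}-1\Big) \;\leq\; C'\, e^{-\tau^2/2}\|c\|_2^2,
\]
using that a Fock interpolation set has bounded upper Beurling density, so that $\#\Lambda_n = O(n)$ exactly cancels the factor $1/n$ above. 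Thus $\|P_n g\|^2 = \|g\|^2 - \|(I-P_n)g\|^2 \geq (A - C'e^{-\tau^2/2})\|c\|_2^2 \geq (A/2)\|c\|_2^2$ for $\tau$ large enough.

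\textbf{Part (ii) and main obstacle.} For the converse, let $\Lambda^*$ be a weak limit of $(\Lambda_n)$: after translating by $z_n \in \bC$ and passing to a subsequence, $\Lambda_n - z_n \to \Lambda^*$ in Hausdorff distance on compacta (cf.\ Section~5). Given finitely supported $(a_\lambda)_{\lambda \in \Lambda^*}$, lift to an $\ell^2$ sequence on $\Lambda_n - z_n$ and invoke uniform polynomial interpolation on $\Lambda_n$ to get $p_n \in \pln$ with $\|p_n\|_{\cF^2} \leq A\|a\|_2$ interpolating in the polynomial-normalized sense. Since $k_n(\mu,\mu)e^{-\pi|\mu|^2} \to 1$ on compacta inside $B_{\rho_n-|z_n|}$ (again by the incomplete-gamma bound), the polynomial normalization converges to the Fock normalization. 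Extracting a weak $\cF^2$-limit $f$ of the translates $p_n(\cdot + z_n)$ gives $f(\lambda)e^{-\pi|\lambda|^2/2} = a_\lambda$ for $\lambda \in \Lambda^*$, and density of finitely supported sequences in $\ell^2(\Lambda^*)$ completes interpolation. \emph{The most delicate step} is the lower Riesz bound in (i): the calibration $\pi\rho_n^2 = n - \sqrt n(\sqrt{2\log n}+\tau)$ is tuned precisely so that the Poisson tail $e^{-t^2/2}$ with $t = \sqrt{2\log n}+\tau$ produces $\sim e^{-\tau^2/2}/n$, just enough to overcome the summation over $\#\Lambda_n \sim n$ points and leave $e^{-\tau^2/2}$ as the free perturbation parameter; a naive Gram-matrix/Schur estimate on $\{k_\mu - k_n(\cdot,\mu)\}$ falls short of this by a factor $n$, so the Taylor-basis Cauchy--Schwarz trick used above is essential.
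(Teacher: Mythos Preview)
Your argument for part~(i) is correct and is essentially the paper's argument in different clothing. The paper compares the Fock-normalized kernels $\kappa_\lambda=k_\lambda/\sqrt{k(\lambda,\lambda)}$ with the polynomial-normalized $\kappa_{n,\lambda}$ and bounds the perturbation $\|\sum a_\lambda(\kappa_\lambda-\kappa_{n,\lambda})\|^2$ by the \emph{trace} of the Gram matrix of the differences, obtaining $\sum_{\lambda\in\Lambda_n}\|\kappa_\lambda-\kappa_{n,\lambda}\|^2\le 2n\,e^{-\tau_n^2/2}$ and then choosing $\tau_n=\sqrt{2\log n}+\tau$. Your ``Taylor-basis Cauchy--Schwarz trick'' yields $\|c\|_2^2\sum_\mu(k/k_n-1)$, which is precisely the trace of the Gram matrix of $\{(I-P_n)v_\mu\}$; so your estimate \emph{is} a Gram-matrix trace bound, and your closing remark that a Gram-matrix estimate ``falls short by a factor $n$'' is wrong---the paper uses exactly that device and it succeeds. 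The only cosmetic difference is that you normalize $k_\mu$ by $\sqrt{k_n(\mu,\mu)}$ (so that $P_n v_\mu=\kappa_{n,\mu}$ exactly), whereas the paper normalizes by $\sqrt{k(\mu,\mu)}$; both lead to the same tail sum.

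For part~(ii) there is a genuine mistake. The paper's notion of weak limit (Section~5) involves \emph{no} translations: it is plain local Hausdorff convergence of $\Lambda_n$ to $\Lambda$. Your introduction of shifts $z_n$ is both unnecessary and, as written, unjustified: the Fock norm is not translation invariant, so a uniform bound on $\|p_n\|_{\cF^2}$ does not give one on $\|p_n(\cdot+z_n)\|_{\cF^2}$, and $\pln$ is not invariant under the unitary Bargmann--Fock shifts either (the paper even flags this as a source of difficulty in the introduction). If you set $z_n=0$, your weak-compactness argument---extract a weak $\cF^2$-limit of the interpolating $p_n$, use that point evaluations are continuous and $k_n(\mu,\mu)/k(\mu,\mu)\to 1$ on compacta---does work and is a legitimate alternative to the paper's route. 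The paper instead stays on the Riesz-sequence side: for $\lambda_j^{(n)}\in\Lambda_n$ converging to $\lambda_j\in\Lambda$, it shows $\sum_j\|\kappa_{\lambda_j}-\kappa_{n,\lambda_j^{(n)}}\|^2\to 0$ via the same trace trick together with the off-diagonal estimate of Lemma~\ref{offd}, and then reads off the lower Riesz bound directly.
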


\mz\ families and uniform interpolating families arise several areas
of analysis. They can be understood as finite-dimensional
approximations of sampling theorems in reproducing kernel Hilbert
spaces. Theorem~\ref{tmintro1}(ii)  shows that a \mz\ family can be used to
prove a sampling theorem in an infinite dimensional space. In
approximation theory, a \mz\ family for a sequence of nested subspaces
gives rise to a sequence of quadrature rules and function
approximation from point evaluations, see~\cite{Gro20} for this
aspect. Random constructions of \mz\ families occur in the theory of
deterministic point processes, e.g., \cite{Ameur13,AHM10,AR20,AC21}. In the recent advances
in complexity theory and data analysis \mz\ families are implicit in
the discretization of norms. For a nice survey see~\cite{KKLT}. 

This work has several predecessors in different contexts. In
\cite{GOC21} we  have studied the analogous problem in the Bergman 
space and in the Hardy space in the unit disk. Indeed, our proof
strategy for Theorem~\ref{tmintro1} is taken from ~\cite{GOC21}.  Whereas  in Bergman space
the results can be formulated similarly to Theorem~\ref{tmintro1},
the situation is Hardy space is rather  different and the construction
of \mz\ families needed to be  based on different
principles. In~\cite{Gro99}  a sampling theorem for bandlimited functions
was derived  via \mz\ families for trigonometric polynomials. The set-up of~\cite{LOC16} is a 
compact manifold with a positive line bundle. 
Marcinkiewicz-Zygmund families for the space of holomorphic sections in 
powers of the 
line bundle are connected to sampling sequences  in the tangent 
space.

Though line bundles appear much more complicated objects than
Fock space, which even has a closed-form reproducing kernel, Fock
space presents some new  difficulties. It lacks  compactness 
that made off-diagonal estimates for the reproducing kernel easier in 
\cite{LOC16}. Another source of difficulty is the  behavior under 
translation. The Fock space $\cF^2$ is invariant with respect to
Bargmann-Fock shifts, while $\pln$ endowed with the 
Fock norm is not.

Finally we mention the extensive work on the asymptotics of
reproducing kernels for weighted polynomials and related construction
of \emph{random} \mz\ families in the context of determinental point
processes~\cite{Ameur13} -- \cite{AC21}.  
In \cite{AOC12,AR20} Y.\ Ameur and his coauthors  have studied a similar 
notion of sampling polynomials with respect to the discrete norm $\sum _{\lambda \in \Lambda _n} 
|p(\lambda)|^2 e^{-\pi n|\lambda|^2}$ instead of  $\sum _{\lambda \in \Lambda _n} 
\frac{|p(\lambda     )|^2}{k_n(\lambda,\lambda )}$. 
Note that  all this work uses   measures that  depend on the polynomial
degree $n$, quite in contrast to our set-up \eqref{eq:k1}.   Their  choice  was motivated  by problems arising in random 
Gaussian matrix ensembles and models of the distribution of points in the one 
component plasma. The common ground   is the
construction of point sets that are sampling for polynomials in Fock
spaces. In \cite{AOC12,AR20} this is achieved with random processes
(determinental point processes), whereas our construction is purely
deterministic and investigates the connection to the
infinite-dimensional sampling problem in $\cF ^2 $.  Therefore the results in
\cite{AOC12,AR20} are not directly comparable to ours.


One of our basic tools 
is the size of the reproducing kernel for the polynomials in a
weighted $L^2$-space. Since  our weight is the 
Gaussian weight, the kernel can be expressed explicitly in terms of the incomplete 
Gamma function which is a classical and well-studied object.  We have  
collected the necessary results  in the appendix for  the sake of
being self-contained. Estimates for the reproducing kernel have  been 
studied in great generality in \cite{Ameur13, AC21} with potential
theoretic methods --- without any reference to the incomplete Gamma
function. Possibly these estimates could also be used  in our context.

 The estimates for the intrinsic reproducing kernel $k_n$ show that
 (i)  the
$L^2$-energy of a  polynomial of degree 
$n$ is  concentrated in a disk of radius  
$\sqrt{n/\pi}$, the so-called bulk region, and (ii) that the intrinsic
kernel $k_n$ for $\pln $ is comparable to the kernel $k(z,w) = e^{\pi z
  \bar{w}}$ precisely in the bulk region. See Lemma 
Lemma~\ref{energy} and Corollary~\ref{kernequ} for  the  precise
statements.

As a consequence of Theorem~\ref{tmintro1} we mention an application
to \tfa . It is well-known that all problems about sampling  in Fock space possess an equivalent formulation about
Gabor frames in $L^2(\bR )$. To state this version, we denote the \tfs\
of a function $g$ by $z= (x,\xi )\in \bR ^2$ by $(\pi (z) g)(t) = e^{2\pi i
  \xi t} g(t-x)$ for $t,x, \xi \in \bR $. The $L^2$-normalized Hermite functions are
denoted by $h_n$, in particular $h_0(t) = 2^{1/4} e^{-\pi t^2}$ is the
Gaussian. Then Theorem~\ref{tmintro1}(i) is equivalent to the
following statement, which may be of interest in the time-frequency
analysis of signal subspaces~\cite{Hla98}. 

\begin{tm}
      Assume that $\Lambda $ is a sampling set for $\cF ^2$ and $\tau
      >0$ large enough.  Then $\{\pi
      (\lambda )h_0 : \pi |\lambda | ^2 \leq n+\sqrt{n}\tau \}$ is a
      frame for $V_n = \mathrm{span}\, \{h_k : k=0, \dots , n\}$ with
      bounds independent of $n$. This means that 
      $$
   A\|f\|_2 \leq \sum _{\lambda \in \Lambda : \pi |\lambda |^2 \leq
     n+\sqrt{n}\tau } |\langle f, \pi _\lambda h_0 \rangle |^2 \leq
   B\|f\|_2^2 \qquad \text{ for all } f\in V_n \, .
   $$
    \end{tm}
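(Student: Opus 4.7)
The plan is to reduce the statement to Theorem~\ref{tmintro1}(i) via the Bargmann transform $\cB : L^2(\bR) \to \cF ^2$. Recall that $\cB$ is a unitary isomorphism sending the Hermite function $h_k$ to the normalized monomial $e_k(z) = (\sqrt{\pi}\,z)^k / \sqrt{k!}$, so that $\cB$ maps $V_n$ isometrically onto $\pln$. The standard identity $|V_{h_0}f(\lambda)| = |\cB f(\lambda)|\, e^{-\pi|\lambda|^2/2}$ relating the short-time Fourier transform with Gaussian window to the Bargmann transform gives, for $F := \cB f$,
\[
  |\langle f, \pi(\lambda)h_0 \rangle|^2 \;=\; |F(\lambda)|^2\, e^{-\pi|\lambda|^2}.
\]
Hence the claimed frame inequality on $V_n$ is equivalent to the discrete inequality
\[
  A\|F\|\fc^2 \;\le\; \sum_{\lambda \in \Lambda,\; \pi|\lambda|^2 \le n + \sqrt{n}\tau} |F(\lambda)|^2\, e^{-\pi|\lambda|^2} \;\le\; B \|F\|\fc^2 \qquad \text{for every } F \in \pln.
\]

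The upper bound is immediate: dropping the restriction to $B_{\rho_n}$ and applying the sampling inequality for $\Lambda$ in $\cF ^2$ (valid since $\pln \subset \cF ^2$) yields the right-hand inequality with the upper Fock sampling bound.

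For the lower bound I would invoke Theorem~\ref{tmintro1}(i): for $\tau$ large enough, $\Lambda_n = \Lambda \cap B_{\rho_n}$ with $\pi \rho_n^2 = n + \sqrt{n}\tau$ is a Marcinkiewicz--Zygmund family for $\pln$, which provides
\[
  A\|F\|\fc^2 \;\le\; \sum_{\lambda \in \Lambda_n} \frac{|F(\lambda)|^2}{k_n(\lambda,\lambda)}.
\]
What remains is to dominate $k_n(\lambda,\lambda)^{-1}$ by a constant multiple of $e^{-\pi|\lambda|^2}$ on $B_{\rho_n}$. Writing
\[
  \frac{k_n(\lambda,\lambda)}{e^{\pi|\lambda|^2}} \;=\; e^{-\pi|\lambda|^2} \sum_{k=0}^{n} \frac{(\pi|\lambda|^2)^k}{k!}
\]
identifies this ratio as a value of the regularized incomplete Gamma function, equivalently the Poisson tail probability $\mathbb{P}(X_{\pi|\lambda|^2} \le n)$. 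For $\pi|\lambda|^2 \le n + \sqrt{n}\tau$ the incomplete Gamma estimates collected in the appendix (or, heuristically, the central limit theorem) yield a uniform lower bound $\mathbb{P}(X_{\pi|\lambda|^2} \le n) \ge c(\tau) > 0$, independent of $n$. Consequently $k_n(\lambda,\lambda)^{-1} \le c(\tau)^{-1} e^{-\pi|\lambda|^2}$ throughout $B_{\rho_n}$, and combining this with the MZ inequality delivers the lower frame bound.

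The main delicate point is precisely this kernel lower bound at the edge $\pi|\lambda|^2 \approx n$, where $k_n(\lambda,\lambda)/e^{\pi|\lambda|^2}$ drops from essentially $1$ in the bulk to roughly $\Phi(-\tau)$ on the outer circle of $B_{\rho_n}$; the quantitative incomplete Gamma asymptotics of the appendix are exactly what is needed, so once the Bargmann dictionary is in place the argument is essentially bookkeeping.
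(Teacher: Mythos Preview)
Your argument is correct and matches the paper's approach: the paper's proof is literally a one-line invocation of Theorem~\ref{samptomz} via the Bargmann transform, and you have spelled out the Bargmann dictionary together with the kernel comparison $k_n(\lambda,\lambda)\asymp e^{\pi|\lambda|^2}$ on $B_{\rho_n}$ (Corollary~\ref{kernequ}) needed to pass between the weight $k_n(\lambda,\lambda)^{-1}$ and $e^{-\pi|\lambda|^2}$. One small remark: the detour through $k_n$ and back is avoidable, since the proof of Theorem~\ref{samptomz} already establishes the $e^{-\pi|\lambda|^2}$-weighted lower bound $\sum_{\lambda\in\Lambda_n}|p(\lambda)|^2 e^{-\pi|\lambda|^2}\ge \tfrac{A}{2}\|p\|\fc^2$ as its very first intermediate step.
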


\textbf{Outlook.} It is needless to say that the topic of \mz\
families and sampling theorems admits dozens of variations. The
ultimate goal is to understand \mz\ families for polynomials $\pln $
in a weighted Bergman space on some general domain $X\subseteq \rd $ (or
$\subseteq \cd $). Intermediate problems would be \mz\ families for polynomials in Fock spaces with more general
   weight $e^{-Q(z)}$, or  the construction  \mz\ families for
   multivariate Bergman spaces $A^2(\mathbb{B} _n)$ in  $n$ complex variables on unit ball in $\bC ^n$.
Even simple variations of the set-up yield interesting new questions.

    The paper is organized as follows: In Section~2 we recall the basic
facts about the Fock space and the associated reproducing
kernels. Section~3 summarizes the required asymptotics of the
incomplete Gamma function. In Section~4 we relate sampling sets for
Fock space to \mz\ families and prove the first part of
Theorem~\ref{tmintro1}. Section~5 covers the converse statement. In
Section~6 we deal with uniform interpolating families and prove
Theorem~\ref{tmintro2}. The connection to the time-frequency analysis
of signal subspaces is explained in Section~7. Finally, in the
appendix we offer some elementary estimates for the zero-order
asymptotics of the incomplete Gamma function. These are, of course,
well-known and added only to make the paper self-contained.

\section{Fock  space}


The monomials $z\mapsto z^k$ are orthogonal in $\cF ^2$, and  the
normalized monomials 
$$
 e_k(z) = \Big( \frac{\pi ^k}{k!}\Big)^{1/2}  \, z^k
 $$
 form an orthonormal basis for $\cF^2$.

 Let $\cP _n$ be the subspace of polynomials of degree at most $n$ in
 $\cF ^2$. The reproducing kernel of $\pln $ is given by
 \begin{equation}
   \label{eq:rep}
   k_n(z,w) = \sum _{k=0}^n e_k(z) \overline{e_k(w)} = \sum _{k=0}^n
   \frac{(\pi z \bar{w})^k}{k!} \, .
 \end{equation}
As $n\to \infty $, this kernel converges to the reproducing kernel of
$\cF ^2 $:
$$
k(z,w) = \lim _{n\to \infty } k_n(z,w) = e^{\pi z \bar{w}} \, .
$$
As we have learned in our study of \mz\ families in Bergman spaces~\cite{GOC21}, we
will need to understand the relation of the kernel $k_n$ to $k$. For
this purpose we will make use of the properties and the asymptotics of
the \emph{incomplete Gamma function}
\begin{equation}
  \label{eq:2}
  \Gamma (z,a) = \int _{a} ^\infty t^{z-1} e^{-t} \, dt \, 
\end{equation}
and
\begin{equation}
  \label{eq:3}
  \gamma (z,a) = \int _0^{a} t^{z-1} e^{-t} \, dt \, .
\end{equation}
Denote the centered disc of radius $\rho $ by  $B_\rho = \{ z\in \bC  : |z| \leq
\rho \}$. Then
\begin{align}
  \int _{B_\rho } |z|^{2k} e^{-\pi |z|^2} \, dm(z) & = 2\pi \int _0^\rho
r^{2k} e^{-\pi r^2}  \, rdr \notag \\
&= \int _0^{\pi \rho ^2} \Big( \frac{u}{\pi}\Big)^k \, e^{-u} \, du
                                        \notag \\
&= \frac{1}{\pi ^k} \gamma (k+1,\pi \rho ^2) \, . \label{gamma1}
\end{align}

\begin{lemma} \label{rkf}
  We have
  $$
  k_n(z,w) = e^{\pi z \bar{w} } \frac{\Gamma (n+1,\pi z\bar{w})}{n!}
  $$
  In particular $k_n(z,z) = e^{\pi |z|^2} \frac{\Gamma(n+1,\pi
    |z|^2)}{n!}$.~\footnote{Note that we consider $\pln $ as a
    subspace of $\cF ^2$ and always use the fixed weight  $e^{-\pi |z|^2}$. The work
  on determinental point processes always uses the weight $e^{-\pi n
    |z|^2}$ for $\pln $. The formulas and the asymptotics are
  therefore different.}  
\end{lemma}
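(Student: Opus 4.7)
The plan is to reduce the claim to the classical closed-form of the incomplete Gamma function at integer first argument. Concretely, we aim to prove the identity
\[
  \Gamma (n+1,a) \;=\; n!\,e^{-a} \sum _{k=0}^n \frac{a^k}{k!}, \qquad a\in \bC ,
\]
and then compare with the series expansion \eqref{eq:rep} of $k_n(z,w)$ via the substitution $a = \pi z \bar{w}$.

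The first step is to establish the identity above. For real $a>0$ and $n=0$, one has $\Gamma (1,a) = \int _a^\infty e^{-t}\,dt = e^{-a}$, which matches. For the inductive step, integration by parts in $\Gamma (n+1,a) = \int _a^\infty t^n e^{-t}\,dt$, with $u=t^n$ and $dv=e^{-t}\,dt$, yields
\[
  \Gamma (n+1,a) \;=\; a^n e^{-a} + n\,\Gamma (n,a).
\]
Substituting the inductive hypothesis for $\Gamma (n,a) = (n-1)!\,e^{-a}\sum _{k=0}^{n-1} a^k/k!$ and collecting terms gives the claimed formula. Once established for $a>0$, the identity extends to all $a\in \bC $ by analytic continuation, since both sides are entire in $a$ (the right-hand side is a polynomial times $e^{-a}$, and the left-hand side is defined on $\bC $ via analytic continuation from the positive real axis).

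The second step is purely cosmetic. Multiplying the identity by $e^{a}/n!$ and setting $a = \pi z \bar{w}$ produces
\[
  e^{\pi z \bar{w}}\,\frac{\Gamma (n+1,\pi z \bar{w})}{n!} \;=\; \sum _{k=0}^n \frac{(\pi z \bar{w})^k}{k!} \;=\; k_n(z,w),
\]
where the last equality is exactly \eqref{eq:rep}. Specialising to $w=z$ (so that $\pi z\bar{w}=\pi |z|^2$) gives the ``in particular'' assertion $k_n(z,z) = e^{\pi |z|^2}\,\Gamma (n+1,\pi |z|^2)/n!$.

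There is no genuine obstacle in this proof: the entire content of the lemma is the classical integration-by-parts identity for $\Gamma (n+1,a)$ at positive integer $n+1$. The only point that merits a line of care is the passage from real $a>0$ (where the integral representation of $\Gamma (n+1,a)$ is used) to complex $a = \pi z\bar{w}$, and this is handled by the observation that both sides of the identity are entire functions of $a$.
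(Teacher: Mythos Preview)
Your proof is correct and follows essentially the same route as the paper: both use the integration-by-parts recursion $\Gamma(n+1,a)=a^n e^{-a}+n\,\Gamma(n,a)$ iterated from $\Gamma(1,a)=e^{-a}$, followed by analytic extension to complex $a$ and the substitution $a=\pi z\bar w$. The only cosmetic difference is that you package the recursion as an explicit induction proving $\Gamma(n+1,a)=n!\,e^{-a}\sum_{k=0}^n a^k/k!$, whereas the paper leaves this summation implicit.
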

\begin{proof}
  See   \cite[8.4.8]{NIST}, or use the obvious formula
  $$
  \frac{1}{n!} \Gamma (n+1, r) = \frac{1}{n!} \int _r^\infty t^n
  e^{-t}\, dt = \frac{r^n}{n!} e^{-r} +   \frac{1}{(n-1)!} \Gamma (n,
  r)
  $$
  repeatedly and then use analytic extension and  substitute $r= \pi
  z\bar{w}$. 
\end{proof}

The energy of a polynomial  $p(z) = \sum _{k=0}^n a_k z^k \in \pln $
on a disc $B_\rho $ is
\begin{align*}
  \int _{B_\rho } |p(z)|^2 \, e^{-\pi |z|^2} \, dm(z) &= \sum _{k=0}^n
 |a_k|^2 \int _{B_\rho } |z|^{2k} e^{-\pi |z|^2}\, dm(z) \\ 
&= \sum _{k=0}^n |a_k|^2 \frac{k!}{\pi ^k} \, \frac{\gamma (k+1,\pi
  \rho ^2)}{k!} \\
&\geq \min _{0\leq k\leq n}  \frac{\gamma (k+1,\pi   \rho ^2)}{k!} \, \sum _{k=0}^n |a_k|^2 \frac{k!}{\pi ^k} 
&\geq \frac{\gamma (n+1,\pi   \rho ^2)}{n!} \|p\|_{\fc } ^2 \, .
\end{align*}
In the last inequality we have used the fact that $k\to \frac{\gamma
  (k+1,\pi   \rho ^2)}{k!}$ is decreasing and that $\|z^k\|_{\fc }^2 =
k!/\pi ^k$ by \eqref{gamma1}.

Consequently, the energy of a polynomial $p\in \pln $ is concentrated
$B_\rho $ with $\rho = \sqrt{n/\pi}$, the so-called bulk region.
\begin{lemma}
  \label{energy}
  For every $p \in \pln $ we have
  \begin{equation}
     \label{eq:w1}
  \int _{B_\rho ^c} |p(z)|^2 \, e^{-\pi |z|^2} \, dm(z) \leq   \frac{\Gamma (n+1,\pi \rho ^2)}{n!} \|p\|_{\fc } ^2 \, . 
   \end{equation}
\end{lemma}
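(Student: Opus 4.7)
The plan is to mirror the computation that appears just above the lemma, but integrate over the complementary region $B_\rho^c$ instead of $B_\rho$. Writing $p(z) = \sum_{k=0}^n a_k z^k$ and using orthogonality of the monomials together with the complementary form of \eqref{gamma1},
\begin{equation*}
\int_{B_\rho^c} |z|^{2k} e^{-\pi |z|^2} \, dm(z) = \frac{1}{\pi^k} \, \Gamma(k+1, \pi \rho^2),
\end{equation*}
I would express
\begin{equation*}
\int_{B_\rho^c} |p(z)|^2 e^{-\pi |z|^2} \, dm(z) = \sum_{k=0}^n |a_k|^2 \, \frac{k!}{\pi^k} \cdot \frac{\Gamma(k+1, \pi \rho^2)}{k!}
\end{equation*}
and then factor out $\max_{0 \leq k \leq n} \Gamma(k+1, \pi \rho^2)/k!$, using the fact that $\|p\|_{\fc}^2 = \sum_{k=0}^n |a_k|^2 k!/\pi^k$.

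The only step requiring a separate observation is that the maximum is attained at $k=n$, i.e.\ that $k \mapsto \Gamma(k+1,r)/k!$ is non-decreasing in $k$ for each fixed $r>0$. This is the dual of the monotonicity used for $\gamma$ in the preceding computation, and both follow from the recursion $\Gamma(k+2,r) = (k+1)\Gamma(k+1,r) + r^{k+1} e^{-r}$, which gives
\begin{equation*}
\frac{\Gamma(k+2,r)}{(k+1)!} - \frac{\Gamma(k+1,r)}{k!} = \frac{r^{k+1} e^{-r}}{(k+1)!} \geq 0.
\end{equation*}
Substituting the maximal value $\Gamma(n+1,\pi \rho^2)/n!$ then yields \eqref{eq:w1}. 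No serious obstacle arises: the argument is the exact dual of the one presented just above the statement, and the monotonicity fact is a one-line consequence of the standard recursion for the incomplete Gamma function.
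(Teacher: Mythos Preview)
Your argument is correct and is essentially the same as the paper's: the paper obtains \eqref{eq:w1} by subtracting the lower bound on $\int_{B_\rho}$ (established just above the lemma via the monotonicity of $k\mapsto \gamma(k+1,\pi\rho^2)/k!$) from $\|p\|_{\fc}^2$, while you compute on $B_\rho^c$ directly and use the equivalent dual monotonicity of $k\mapsto \Gamma(k+1,\pi\rho^2)/k!$. The two routes differ only by the identity $\gamma(k+1,r)+\Gamma(k+1,r)=k!$ and are otherwise identical.
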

\begin{proof}
  This follows immediately from the previous estimates via 
  \begin{align}
  \int _{B_\rho ^c} |p(z)|^2 \, e^{-\pi |z|^2} \, dm(z)  &= \|p\|^2_{\fc }
  - \int _{B_\rho } |p(z)|^2 \, e^{-\pi |z|^2} \, dm(z) \notag \\
 &\leq (1-
  \frac{\gamma (n+1,\pi   \rho ^2)}{n!} ) \, \|p\|_{\fc } ^2 =
  \frac{\Gamma (n+1,\pi \rho ^2)}{n!} \|p\|_{\fc } ^2 \, . 
\end{align}
\end{proof}

\section{Asymptotics of the incomplete Gamma function}
The asymptotic behavior of the incomplete Gamma function is well
understood. We collect the properties required for \mz\ families in
Fock space. As usual $f \asymp g $ means that there exists a constant
such that $C\inv f (x) \leq g(x) \leq C f(x)$ for all $x$ in the
domain of $f$ and $g$, $f \lesssim g$ means $f(x) \leq C g(x)$,  and $f \sim g$ near $x_\infty $  means that
$\lim _{x\to x_\infty} \frac{f(x)}{g(x)} = 1$.

The following result has been proved on several levels of generality~\cite{GST12,Tem75,Tri50,Nem16,Nem16a}.

 \emph{ The normalized incomplete Gamma function admits the asymptotic
  expansion
  \begin{equation}
    \label{eq:w11}
 \frac{\Gamma(a, a + \tau \sqrt{a})}{\Gamma (a)} \sim \tfrac{1}{2} 
\mathrm{erfc} \, (\frac{\tau}{\sqrt{2}}) + \frac{1}{\sqrt{2\pi a}}
 e^{-\tau ^2/2} \sum _{n=0}^\infty \frac{C_n(\tau )}{a^{n/2}} \, , 
  \end{equation}
where $C_0(\tau ) = \frac{\tau^2 - 1}{3}$ and $\mathrm{erfc}(y) =
\frac{2}{\sqrt{\pi }}\int
_y^\infty e^{-t^2} \, dt $. 
This implies that} 
\begin{equation}
  \label{eq:w12}
\Big|\frac{\Gamma(a,a + \tau \sqrt{a})}{\Gamma (a)} - \tfrac{1}{2}
\mathrm{erfc} (\frac{\tau}{\sqrt{2}})\Big| \leq \frac{\tau^2 - 1}{3}
    e^{-\tau   ^2/2} \frac{1}{\sqrt{2\pi a}} \, .
\end{equation}
We only need these estimates for $a=n+1$ and $\tau >0$, but their validity has
been established for  large domains in $\bC $. 

\begin{prop} \label{asympt1}~
\begin{enumerate}[(i)]
\item  For every $\epsilon >0$ there is $\tau>0$, such that
 $$
\frac{ \Gamma (n+1,n+\sqrt{ n }\tau)}{n!} < \epsilon \qquad \forall
n\geq n_0 $$
\item  For every $\tau >0$ there is a constant $ C(\tau )>0$, such that  
$$
\frac{ \Gamma (n+1,n+\sqrt{ n }\tau )}{n!} \geq C(\tau ) \qquad \forall
n\geq n_0 \, .$$
In fact, $C(\tau )$ can be taken as $C(\tau ) =  \frac{1}{4}
\mathrm{erf} (\tau /\sqrt{2})  $
\item  For $\tau >0$
$$
1-\frac{1}{n!} \Gamma (n+1, n-\sqrt{n}\tau ) \leq
 e^{-\tau ^2/2} \, .
$$
\item  For every  $x\geq 0$
\begin{equation*}
  \lim _{n\to \infty } \frac{\Gamma (n+1,x)}{n!} =1  \, .
\end{equation*}
The convergence is uniform on bounded sets $\subseteq \bR ^+$ and
exponentially fast. 

\item For all $n$
\begin{equation*}
 \frac{\Gamma (n+1,n)}{n!} > 1/2 \,. 
\end{equation*}
\end{enumerate}
 \end{prop}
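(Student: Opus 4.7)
My plan is to treat the five items in two groups. Parts (i)--(iii) should follow from the asymptotic expansion \eqref{eq:w11} and its uniform error bound \eqref{eq:w12} applied with $a=n+1$, while parts (iv)--(v) are best handled directly from the classical integer-parameter identity $\Gamma(n+1,x)=n!\,e^{-x}\sum_{k=0}^{n} x^k/k!$. Throughout (i)--(iii) I would first rewrite $n+\sqrt{n}\,\tau=(n+1)+\sqrt{n+1}\,\tau_n$ with $\tau_n=\tau+O(1/\sqrt{n})$, so that \eqref{eq:w11} applies in its natural form; continuity of $\tfrac{1}{2}\mathrm{erfc}(\cdot/\sqrt{2})$ and of $C_0$ absorbs this perturbation into the error term.

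For (i), since $\tfrac{1}{2}\mathrm{erfc}(\tau/\sqrt{2})\to 0$ as $\tau\to\infty$, I would choose $\tau$ so large that the limit is below $\epsilon/2$ and then $n_0$ so large that the $O(n^{-1/2})$ error in \eqref{eq:w12} is also below $\epsilon/2$. Part (ii) is the mirror: for any fixed $\tau>0$ the limit $\tfrac{1}{2}\mathrm{erfc}(\tau/\sqrt{2})$ is strictly positive, and \eqref{eq:w12} shows the finite-$n$ value is eventually at least any fixed fraction of this limit, the natural explicit choice being $C(\tau)=\tfrac{1}{4}\mathrm{erfc}(\tau/\sqrt{2})$. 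For (iii) I would recognize $1-\Gamma(n+1,n-\sqrt{n}\tau)/n!$ as the complementary Poisson tail $\Pr(Y>n)$ with $Y\sim\mathrm{Poisson}(n-\sqrt{n}\tau)$ and apply the Chernoff--Bennett bound $\Pr(Y\geq \mu+t)\leq \exp(-t^2/(2(\mu+t)))$ with $\mu=n-\sqrt{n}\tau$ and $t=1+\sqrt{n}\tau$; a short computation shows the resulting exponent is at most $-\tau^2/2$ once $\sqrt{n}\geq(\tau-1/\tau)/2$, which holds for all $n$ large. An alternative is to apply \eqref{eq:w11} with $-\tau$ in place of $\tau$, identifying the complementary tail as $\tfrac{1}{2}\mathrm{erfc}(\tau/\sqrt{2})$ at leading order and using the Gaussian tail bound $\tfrac{1}{2}\mathrm{erfc}(\tau/\sqrt{2})\leq\tfrac{1}{2}e^{-\tau^2/2}$.

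For (iv), the identity $1-\Gamma(n+1,x)/n!=e^{-x}\sum_{k>n}x^k/k!$ shows that on $[0,R]$, once $n+1>R$, the tail is dominated by a geometric series whose leading term is $R^{n+1}/(n+1)!$, yielding super-exponential convergence uniform on bounded sets. For (v), I would substitute $t=n\pm s$ on $s\in[0,n]$ to rewrite $\Gamma(n+1,n)-\gamma(n+1,n)$ as a single integral over $[0,n]$ and reduce the claim to the pointwise inequality $(n+s)^n e^{-(n+s)}>(n-s)^n e^{-(n-s)}$ for $0<s\leq n$; taking logarithms this becomes $n\log\tfrac{1+s/n}{1-s/n}>2s$, which follows from the all-positive Taylor series $\log\tfrac{1+x}{1-x}=2\sum_{j\geq 0}x^{2j+1}/(2j+1)$, with strict inequality produced by the $j\geq 1$ terms. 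The main obstacle I anticipate is purely bookkeeping: the shift between the expansion's normalization $a+\sqrt{a}\,\tau$ and the statement's $n+\sqrt{n}\,\tau$ in (i)--(iii), together with tracking constants honestly in (ii); everything else reduces to a direct appeal to the cited asymptotic, an elementary tail bound, or a short Taylor argument.
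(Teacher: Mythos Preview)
Your proposal is correct. For (i) and (ii) you do exactly what the paper does: invoke the error bound \eqref{eq:w12} with $a=n+1$, choose $\tau$ (respectively $n_0$) so that the leading term $\tfrac{1}{2}\mathrm{erfc}(\tau/\sqrt{2})$ and the $O(n^{-1/2})$ remainder are each under control. Your remark about the shift $n+\sqrt{n}\,\tau=(n+1)+\sqrt{n+1}\,\tau_n$ is the right bookkeeping and the paper glosses over it.

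Where you diverge is in (iii)--(v). The paper's main text simply declares (iii) and (iv) ``well-known'' and cites NIST formula 8.10.13 for (v); its appendix supplies elementary real-variable arguments based on Stirling's formula and the substitution $u=t/n-1$ (respectively $u=1-t/n$), reducing everything to the inequalities $u-\ln(1+u)\geq u^2/6$ and $u+\ln(1-u)\leq -u^2/2$. Your routes are genuinely different: for (iii) you recognize $1-\Gamma(n+1,\lambda)/n!$ as a Poisson tail and apply a Chernoff--Bennett bound, which is cleaner conceptually and gives the same exponent; for (iv) you use the exact identity $\Gamma(n+1,x)/n!=e^{-x}\sum_{k\leq n}x^k/k!$ and a geometric tail estimate, which is more direct than the paper's detour through (iii); and for (v) your symmetrization $t=n\pm s$ together with $\log\tfrac{1+x}{1-x}>2x$ gives a fully self-contained proof where the paper just cites a handbook. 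Each of your alternatives is at least as elementary as the paper's appendix argument, and your treatment of (v) is an actual proof rather than a citation.
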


 \begin{proof}
 Items (i) and (ii) follow readily  from \eqref{eq:w12} as follows:  

 (i)    Choose $\tau >0$, such that
   $\tfrac{1}{2}\mathrm{erfc}(\tau/\sqrt{2})  < \epsilon /2$. Now
   choose $n_0\in \bN $, such that the error $\frac{\tau^2 - 1}{3}
    e^{-\tau   ^2/2} \frac{1}{\sqrt{2\pi (n+1)}} < \epsilon /2$ for
    $n\geq n_0$. By \eqref{eq:w12} we then have $\frac{ \Gamma
      (n+1,n+\sqrt{ n }\tau)}{n!} < \epsilon $ for all $n\geq n_0 $. 

(ii) Given $\tau >0$ choose $n_0$ such that the error  $\frac{\tau^2 - 1}{3}
    e^{-\tau   ^2/2} \frac{1}{\sqrt{2\pi (n+1)}} < \tfrac{1}{4}
    \textrm{erfc} (\tau /\sqrt{2}) $ for $n\geq n_0$. Then 
$\frac{ \Gamma (n+1,n+\sqrt{ n }\tau )}{n!} \geq  \tfrac{1}{4}
    \textrm{erfc} (\tau /\sqrt{2})>0$  for all
$n\geq n_0 $. 

(iii) and (iv) are well-known.

(v) is taken from ~\cite{NIST}, formula 8.10.13.

For completeness we summarize the   arguments for the zero
order asymptotics in the appendix. In contrast to the  full
asymptotics of the incomplete Gamma function, they are elementary.  
 \end{proof}

 \begin{cor} \label{kernequ}
If  $\pi |z|^2 \leq n + \sqrt{n}\tau$, then  $k_n(z,z) \asymp k(z,z)
= e^{\pi |z|^2}$ with constant depending only on $\tau $, but not on
$n$.
\end{cor}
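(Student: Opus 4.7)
The plan is to read the claim off directly from the explicit formula $k_n(z,z) = e^{\pi|z|^2}\,\Gamma(n+1,\pi|z|^2)/n!$ provided by Lemma \ref{rkf}. Since $k(z,z) = e^{\pi|z|^2}$, the ratio $k_n(z,z)/k(z,z)$ equals $\Gamma(n+1,\pi|z|^2)/n!$, so the corollary reduces to a two-sided bound on this ratio (by constants depending only on $\tau$) under the hypothesis $\pi|z|^2 \le n+\sqrt n\,\tau$.

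For the upper bound I would note the trivial inequality $\Gamma(n+1,\pi|z|^2) \le \Gamma(n+1) = n!$, which gives $k_n(z,z) \le e^{\pi|z|^2} = k(z,z)$ without any restriction on $z$. For the lower bound I would use that $x \mapsto \Gamma(n+1,x)$ is monotonically decreasing on $[0,\infty)$; combined with the hypothesis $\pi|z|^2 \le n + \sqrt n\,\tau$, this yields
\[
\frac{\Gamma(n+1,\pi|z|^2)}{n!} \;\ge\; \frac{\Gamma(n+1,n+\sqrt n\,\tau)}{n!}.
\]
Now Proposition \ref{asympt1}(ii) supplies a constant $C(\tau)>0$ such that the right-hand side is $\ge C(\tau)$ for all $n \ge n_0(\tau)$, which is exactly the desired uniform lower bound in the asymptotic regime.

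The only thing left is the finitely many small indices $n < n_0(\tau)$. For each such $n$, the function $x \mapsto \Gamma(n+1,x)/n!$ is continuous, positive, and strictly decreasing on $[0,\infty)$, so on the compact range $0 \le \pi|z|^2 \le n+\sqrt n\,\tau$ it attains a positive minimum. Taking the minimum over the finitely many values of $n < n_0(\tau)$ (and the overall lower bound $C(\tau)$ from (ii)) produces a single constant, still depending only on $\tau$, that works for all $n$.

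No step here looks substantive; the whole content of the corollary is an exact identity for $k_n(z,z)$ and the asymptotic in Proposition \ref{asympt1}(ii). The only minor obstacle is cosmetic bookkeeping for small $n$, which disappears once one observes that $\Gamma(n+1,x)/n!$ is continuous and positive, so any compact range is fine.
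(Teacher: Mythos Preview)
Your argument is correct and is exactly the intended one: the paper states this as an immediate corollary of Lemma~\ref{rkf} and Proposition~\ref{asympt1}(ii) without writing out a proof, and your derivation (trivial upper bound $\Gamma(n+1,\pi|z|^2)\le n!$, monotonicity of $x\mapsto\Gamma(n+1,x)$, then Proposition~\ref{asympt1}(ii)) is precisely how one unpacks it. The extra bookkeeping for $n<n_0(\tau)$ is harmless but not strictly needed in the paper's context, where all statements are understood for $n\ge n_0$.
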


 We also need an off-diagonal estimate for the kernel $k_n$.
 \begin{lemma} \label{offd}
   Assume that $\pi |z|^2 < n(1-\epsilon)  $  for fixed $\epsilon >0$
   and $|z-w | \leq \tau $. Then for $n$ large enough depending on
   $\epsilon $, 
   \begin{equation}
     \label{eq:de1}
     \Big|\frac{\Gamma(n+1, \pi z\bar{w}) }{n!} -
     \frac{\Gamma(n+1, \pi |z|^2 )}{n!} \Big| \leq  C e^{-\epsilon ^2 n /4 } \, .
   \end{equation}
\end{lemma}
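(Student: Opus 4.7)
I would avoid the direct contour integration of $t^n e^{-t}$ along the segment from $\pi|z|^2$ to $\pi z\bar w$ (which loses control when $\pi|z|^2\ll n$) and instead use the closed form from Lemma~\ref{rkf},
\begin{equation*}
\frac{\Gamma(n+1,\zeta)}{n!} \;=\; e^{-\zeta}\sum_{k=0}^{n}\frac{\zeta^k}{k!} \;=\; 1 - e^{-\zeta}R_n(\zeta), \qquad R_n(\zeta):=\sum_{k=n+1}^{\infty}\frac{\zeta^k}{k!}.
\end{equation*}
Writing $a=\pi|z|^2$ and $b=\pi z\bar w$, the left-hand side of \eqref{eq:de1} becomes $|e^{-a}R_n(a)-e^{-b}R_n(b)|$, and I would estimate the two pieces separately, reducing each to the real-argument bound of Proposition~\ref{asympt1}(iii).

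For the real piece, $a\le n(1-\epsilon)=n-\sqrt n\cdot\sqrt n\epsilon$, so Proposition~\ref{asympt1}(iii) immediately gives $e^{-a}R_n(a)=1-\Gamma(n+1,a)/n!\le e^{-n\epsilon^2/2}$. For the complex piece, $R_n$ has non-negative Taylor coefficients, hence $|R_n(b)|\le R_n(|b|) = e^{|b|}\bigl(1-\Gamma(n+1,|b|)/n!\bigr)$, whence
\begin{equation*}
|e^{-b}R_n(b)| \;\le\; e^{|b|-\mathrm{Re}\,b}\,\Bigl(1-\frac{\Gamma(n+1,|b|)}{n!}\Bigr).
\end{equation*}
The key step is the elementary identity, obtained from $2\,\mathrm{Re}(z\bar w)=|z|^2+|w|^2-|z-w|^2$,
\begin{equation*}
|b|-\mathrm{Re}\,b \;=\; \tfrac{\pi}{2}\bigl(|z-w|^2-(|z|-|w|)^2\bigr) \;\le\; \tfrac{\pi\tau^2}{2},
\end{equation*}
so the prefactor is bounded by the constant $e^{\pi\tau^2/2}$, independently of $n$ and $z$. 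The hypothesis together with $|w|\le|z|+\tau$ yields $|b|\le n(1-\epsilon)+\tau\sqrt{\pi n}$, which for all $n$ large enough (depending on $\epsilon$ and $\tau$) implies $|b|\le n-\sqrt n\cdot(\sqrt n\epsilon/\sqrt 2)$; a second application of Proposition~\ref{asympt1}(iii), together with the monotonicity of $r\mapsto\Gamma(n+1,r)$, gives $1-\Gamma(n+1,|b|)/n!\le e^{-n\epsilon^2/4}$. Summing the two contributions yields \eqref{eq:de1} with $C=1+e^{\pi\tau^2/2}$.

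The main obstacle is precisely the one just sidestepped: a naive bound of $|t^n e^{-t}|$ along the straight line from $a$ to $b$ does not detect the sharp concentration of $r^n e^{-r}/n!$ near $r=n$ when $a$ is of order $o(n)$, and the resulting estimate is off by an exponential factor. The device of rewriting $\Gamma(n+1,\zeta)/n!$ in terms of the tail $R_n(\zeta)$, combined with the elementary algebraic identity for $|b|-\mathrm{Re}\,b$, avoids contour integration entirely and reduces the off-diagonal estimate to two real-argument inequalities already furnished by Proposition~\ref{asympt1}(iii).
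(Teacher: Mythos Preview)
Your argument is correct and takes a genuinely different route from the paper. The paper writes the difference as a contour integral $\int_{\pi|z|^2}^{\pi z\bar w} t^n e^{-t}\,dt$ along the straight segment, bounds the integrand by its maximum on that segment, and then applies Stirling's formula together with the inequality $1-x+\ln x\le -\epsilon^2/2$ for $x\le 1-\epsilon$ to extract the rate $e^{-n\epsilon^2/4}$. Your approach instead exploits the closed form $\Gamma(n+1,\zeta)/n!=1-e^{-\zeta}R_n(\zeta)$ to reduce the estimate to two separate tail bounds, each handled by Proposition~\ref{asympt1}(iii); the positivity of the coefficients of $R_n$ converts the complex argument $b$ to $|b|$, and the algebraic identity $|b|-\mathrm{Re}\,b=\tfrac{\pi}{2}\bigl(|z-w|^2-(|z|-|w|)^2\bigr)$ disposes of the only potentially dangerous factor. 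This is more modular---it recycles an estimate already proved rather than invoking Stirling anew---and it yields the explicit constant $C=1+e^{\pi\tau^2/2}$. The paper's contour argument is more hands-on but requires some care to make the choice of $n_0$ uniform over all $z$ in the bulk (the correction term $\ln(1+\sqrt{\pi}\tau/\sqrt{nx})$ involves $1/\sqrt{nx}=1/(\sqrt{\pi}\,|z|)$, which is not small when $|z|$ is); your reduction sidesteps this entirely.
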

 \begin{proof}
Since $\Gamma (n+1, \pi z\bar{w})$  is invariant with respect to the
rotation $(z,w) \to (e^{i\theta } z,e^{i\theta }w)$, we may assume
that $z=r \in \bR, r>0$ and $w=r+\bar{u}$ with $|u| \leq \tau$.  
   We then write
   $$
   \Gamma (n+1, \pi z\bar{w}) =    \Gamma (n+1, \pi r^2 + \pi r u) =
   \int _{\pi r^2} ^\infty t^n e^{-t} \, dt +    \int _{\pi r^2 + \pi
     ru} ^{\pi r^2}  \dots = \Gamma (n+1, \pi r^2) +  \int _{\pi r^2 + \pi
     ru} ^{\pi r^2} \dots \, .
$$
Let $\gamma (s) = \pi r^2 + s \pi r u$ the line segment from $\pi r^2
\in \bC $ to $\pi r^2 + \pi r u$. Then  
\begin{align*}
\big|\int _{\pi r^2}  ^{\pi r^2 + \pi
     ru} t^n e^{-t} \, dt \big|&= \big|\pi r u \int _0^1 (\pi r^2 + s \pi ru)^n
     e^{-\pi r^2 - s \pi ru} \, ds\big| \\
   &\leq \pi r |u| (\pi r^2 + \pi r|u|)^n e^{-\pi r^2+\pi r |u|} \\
&\leq \pi r \tau (\pi r^2 + \pi r \tau)^n e^{-\pi r^2 + \pi r \tau }   \, .
  \end{align*}
Observe that $r\to (\pi r^2 + \pi r \tau)^n e^{-\pi r^2}$ is
increasing, as long as $\pi r^2 + \pi r \tau \leq n$.  Set $x=\tfrac{\pi r^2}{n}\leq 1-\epsilon$, then $\pi r \tau =\sqrt{\pi n x } \tau$,  and  by assumption $x<1$. Using Sterling's formula, we continue with
 \begin{align*}
\frac{1}{n!} \,  \pi r \tau (\pi r^2 + \pi r \tau)^n e^{-\pi r^2 +
   \pi r \tau  }  &\leq \frac{\sqrt{\pi } \tau
   \sqrt{nx}}{\sqrt{2\pi n }} \, \Big(\frac{e}{n} \Big)^n \Big(nx +
   \sqrt{\pi nx} \tau\Big)^n e^{-nx + \sqrt{\pi x n} \tau }  \\
&\lesssim \sqrt{x} \Big(x+\frac{\sqrt{\pi x} \tau}{\sqrt{n}}\Big)^n  e^{n-nx
         +   \sqrt{\pi x n} \tau}\\
&\leq x^n \Big(1+\frac{\sqrt{\pi } \tau}{\sqrt{xn}}\Big)^n
                                      e^{n(1-x+\sqrt{\pi x}\tau / \sqrt{n})} \\
&\leq \exp \Big(n\big(   1 - x+   \ln x +  \ln (1 + \frac{\sqrt{\pi}
 \tau}{\sqrt{n  x}}) + \frac{\sqrt{\pi}  \tau}{\sqrt{n  x}} \big)\Big)  \, .
 \end{align*}
Since $x\to 1-x+\ln x$ is increasing on $(0,1]$ and $x\leq 
1-\epsilon $, we have $1-x+\ln x \leq \epsilon +\ln (1-\epsilon )
\leq - \epsilon ^2/2$. Choose $n$ so large that $\ln (1 + \frac{\sqrt{\pi}
 \tau}{\sqrt{n  x}}) + \frac{\sqrt{\pi}
 \tau}{\sqrt{n  x}} \leq \epsilon ^2/4 $, then  the latter expression is
dominated by
$e^{ -n \epsilon^2/4}$,  
and   this expression tends to $0$ exponentially fast, as $n\to
\infty $. This proves the claim.
 \end{proof}

\section{Sampling implies \mz\ inequalities}

We summarize the main facts about  sampling sets in $\cF ^2$ from the
literature~\cite{lyub92,seip92,seip04,seip-wallsten}.

(i) A set  $\Lambda \subseteq \bC $ is sampling for
$\cF ^2$, \fif\  it contains a uniformly separated set $\Lambda
'\subseteq \Lambda $ with lower Beurling density  $D^-(\Lambda ' ) >1$. 


(ii) \emph{Tail estimates.} Let  $f\in \cF ^2$ and $\rho >0$. The subharmonicity of $|f|^2$
implies that
\begin{equation}
  \label{eq:r4}
  |f(\lambda )|^2 e^{-\pi |\lambda |^2} \leq c_\rho \int _{B(\lambda ,
    \rho )} |f(z)|^2 \, e^{-\pi |z|^2} \, dm(z)
\end{equation}
for all $\lambda \in \bC $. The constant is $c_\rho = e^{\pi \rho
  ^2}/(\pi \rho ^2)$, but we will not need it.

(ii) If $\Lambda $ is relatively separated, i.e., a finite union of
$K$ uniformly discrete subsets of $\bC $ with separation $\rho >0$, then
\begin{equation}
  \label{eq:r6}
\sum _{\lambda \in \Lambda , |\lambda | >R}    |f(\lambda )|^2 e^{-\pi
  |\lambda |^2} \leq c_\rho K \int _{|z|> R-\rho}  |f(z)|^2 \, e^{-\pi |z|^2} \, 
dm(z)
\end{equation}

\begin{tm} \label{samptomz}
  Assume that $\Lambda \subseteq \bC $ is a sampling set for $\cF ^2
  $ with bounds $A,B$.  For $\tau >0$ set $\rho _n$, such that $\pi \rho _n^2 =
  n+\sqrt{n} \tau $. Then for  $\tau >0$ large enough,   the sets $\Lambda _n = 
  \Lambda \cap  B_{\rho_n} $  form a \mz\ family for $\pln $ in
  $\cF ^2$.
\end{tm}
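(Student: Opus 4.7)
My plan is to reduce the Marcinkiewicz-Zygmund inequalities to the $\cF^2$-sampling inequality by replacing $k_n(\lambda,\lambda)^{-1}$ with $e^{-\pi|\lambda|^2}$. Since every $\lambda\in\Lambda_n$ lies in $B_{\rho_n}$ with $\pi\rho_n^2=n+\sqrt{n}\tau$, Corollary~\ref{kernequ} supplies constants $c_1,c_2>0$, depending only on $\tau$, such that
\[
c_1|p(\lambda)|^2 e^{-\pi|\lambda|^2}\le \frac{|p(\lambda)|^2}{k_n(\lambda,\lambda)}\le c_2|p(\lambda)|^2 e^{-\pi|\lambda|^2}
\]
for all $\lambda\in\Lambda_n$. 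It therefore suffices to establish a two-sided bound $\tilde A\|p\|_{\cF^2}^2\le \sum_{\lambda\in\Lambda_n}|p(\lambda)|^2 e^{-\pi|\lambda|^2}\le \tilde B\|p\|_{\cF^2}^2$ for all $p\in\pln$, with constants independent of $n$.

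The upper bound is immediate from the $\cF^2$-sampling inequality applied to $p\in\cF^2$: since $\Lambda_n\subseteq\Lambda$,
\[
\sum_{\lambda\in\Lambda_n}|p(\lambda)|^2 e^{-\pi|\lambda|^2}\le \sum_{\lambda\in\Lambda}|p(\lambda)|^2 e^{-\pi|\lambda|^2}\le B\|p\|_{\cF^2}^2.
\]
For the lower bound I start from $A\|p\|_{\cF^2}^2\le\sum_{\lambda\in\Lambda}|p(\lambda)|^2 e^{-\pi|\lambda|^2}$ and control the tail $T_n:=\sum_{\lambda\in\Lambda,\,|\lambda|>\rho_n}|p(\lambda)|^2 e^{-\pi|\lambda|^2}$. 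Every sampling set is relatively separated with some separation $\rho_0>0$, so \eqref{eq:r6} bounds $T_n$ by a constant multiple of the $\cF^2$-energy of $p$ outside $B_{\rho_n-\rho_0}$; Lemma~\ref{energy} then yields
\[
T_n\le C\,\frac{\Gamma(n+1,\pi(\rho_n-\rho_0)^2)}{n!}\,\|p\|_{\cF^2}^2.
\]

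It remains to show that this tail constant tends to $0$ uniformly in $n$ when $\tau$ is taken large. Expanding
\[
\pi(\rho_n-\rho_0)^2 = n+\sqrt{n}\tau - 2\sqrt{\pi}\rho_0\sqrt{n+\sqrt{n}\tau}+\pi\rho_0^2 = n+\sqrt{n}(\tau-2\sqrt{\pi}\rho_0)+O(1),
\]
I see that for $\tau>2\sqrt{\pi}\rho_0$ and $n$ sufficiently large, $\pi(\rho_n-\rho_0)^2\ge n+\sqrt{n}\tau'$ for some $\tau'>0$ that grows with $\tau$. Proposition~\ref{asympt1}(i) applied to $\tau'$ then drives $\Gamma(n+1,\pi(\rho_n-\rho_0)^2)/n!$ below $A/(2C)$ as soon as $\tau$ is large enough, giving $T_n\le\tfrac{A}{2}\|p\|_{\cF^2}^2$ and hence the desired lower bound with constant $A/2$. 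The main subtlety is the bookkeeping of the parameters: the relative separation constant $\rho_0$ of $\Lambda$, the constant $c_{\rho_0}K$ in \eqref{eq:r6}, and the sampling bound $A$ all feed into the threshold required for $\tau$, so one must carefully track how the radial safety margin $\rho_0$ shifts the asymptotic parameter inside the incomplete Gamma function.
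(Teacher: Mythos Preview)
Your proposal is correct and follows essentially the same route as the paper: replace $k_n(\lambda,\lambda)^{-1}$ by $e^{-\pi|\lambda|^2}$ via Corollary~\ref{kernequ}, use the upper $\cF^2$-sampling bound together with $\Lambda_n\subseteq\Lambda$ for the upper inequality, and for the lower inequality subtract the tail, control it by \eqref{eq:r6} and Lemma~\ref{energy}, and then shift the asymptotic parameter in Proposition~\ref{asympt1}(i) by the separation $\rho_0$. The only cosmetic difference is that the paper invokes the trivial bound $k_n\le k$ directly for the lower inequality rather than appealing to both sides of Corollary~\ref{kernequ}, and makes the shift explicit as $\tau'=\tau-3\sqrt{\pi}\rho$.
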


\begin{proof}
\emph{Lower bound}: Since always $k_n(z,z) \leq k(z,z) = e^{\pi
  |z|^2}$, we may replace $k_n$ by $k$ in the sampling inequalities:
\begin{align*}
\sum _{\lambda \in \Lambda _n} \frac{|p(\lambda
  )|^2}{k_n(\lambda,\lambda)} &\geq \sum _{\lambda \in \Lambda _n} \frac{|p(\lambda
      )|^2}{k(\lambda,\lambda)}\\
  &= \sum _{\lambda \in \Lambda _n} |p(\lambda
)|^2 e^{-\pi |\lambda |^2} = \sum _{\lambda \in \Lambda } - \sum
    _{\lambda \in \Lambda : |\lambda | > \rho _n} \dots  \, .
  \end{align*}
Since $\Lambda $ is a sampling set for $\cF ^2$, the first term 
satisfies
$ \sum _{\lambda \in \Lambda } |p(\lambda
)|^2 e^{-\pi |\lambda |^2} \geq A \|p\|\fc ^2$. For the second term we
observe that $\Lambda $ is a finite union of uniformly discrete sets
with separation $\rho >0$ and apply  \eqref{eq:r6} and \eqref{eq:w1}:
\begin{align*}
  \sum _{\lambda \in \Lambda : |\lambda | >\rho _n} |p(\lambda
)|^2 e^{-\pi |\lambda |^2} &\leq C \int _{|z|> \rho _n-\rho} |p(z)|^2 
e^{-\pi |z|^2}\, dm(z) \\
&\leq C \frac{\Gamma (n+1, \pi (\rho _n-\rho )^2)}{n!}  \|p\|\fc ^2
                         \qquad \text { for } p\in \pln \, .
\end{align*}
Our choice of $\rho _n$ implies that
\begin{align*}
\pi (\rho _n-\rho )^2 &= \pi \rho _n^2 - 2\pi \rho _n \rho + \pi \rho
                        ^2 \\
&=  n+\sqrt{n} \tau  + \pi \rho ^2 - 2\sqrt{\pi } \,  \sqrt{n+\sqrt{n}\tau} \rho \\
&\geq   n+\sqrt{n} \tau  - 2\sqrt{\pi n} \rho (1+\frac{\tau}{2\sqrt{n}})\\
&\geq  n+\sqrt{n} \tau '
\end{align*}
with $\tau' = \tau - 3\sqrt{\pi } \rho  $ whenever 
$\sqrt{n} \geq \tau$. 
 Since $a\mapsto \Gamma
(x,a)$ is decreasing, we have
$\frac{\Gamma (n+1, \pi (\rho _n-\rho )^2)}{n!} \leq \frac{\Gamma (n+1,
  n+\sqrt{n}\tau ')}{n!}$. In view of Corollary~\ref{asympt1}(i) we may  choose
$\tau ' $ and hence $\tau $ so that 
$$
  \sum _{\lambda \in \Lambda : |\lambda | >\rho _n} |p(\lambda
)|^2 e^{-\pi |\lambda |^2} \leq C \frac{\Gamma (n+1,
  n+\sqrt{n}\tau ')}{n!}  \|p\| \fc ^2 \leq \frac{A}{2} \|p\|\fc  \qquad \text{
  for all } p \in \pln  \, 
$$
for large $n$, $n\geq n_0$, say. Combining the inequalities, we obtain $
\sum _{\lambda \in \Lambda _n} \frac{|p(\lambda
  )|^2}{k_n(\lambda,\lambda)} \geq \frac{A}{2}\|p\|\fc ^2$ for all
$p\in \pln $.

\emph{Upper inequality:} For the above  choice of $\tau $
Proposition~\ref{asympt1}(ii)   says that 
$$
\frac{\Gamma (n+1,   n+\sqrt{n}\tau )}{n!} \geq   
\tfrac{1}{4}     \textrm{erfc} (\tau /\sqrt{2}) =C(\tau ) = C \, 
$$
for $n\geq n_0$. 
This implies that $k_n(\lambda , \lambda )\inv  \leq C\inv k(\lambda ,
\lambda )\inv = C \inv e^{-\pi |\lambda |^2} $ for $\pi |\lambda |^2
\leq n+\sqrt{n} \tau $,  and thus 
\begin{equation*}
        \sum _{\lambda \in \Lambda _n} \frac{|p(\lambda
  )|^2}{k_n(\lambda , \lambda )} \leq  C\inv     \sum _{\lambda \in
  \Lambda  , \pi |\lambda |^2 \leq  n+\sqrt{n}\tau} |p(\lambda )|^2 e^{-\pi
  |\lambda |^2}  \leq C\inv B  \|p\| \fc \, \quad \text{ for all } p \in
\pln \, ,
\end{equation*}
because $\Lambda \supseteq \Lambda _n$ is a sampling set for $\cF
^2$. 
\end{proof}

Note that the lower bound in the \mz\ inequalities matches the lower
bound $A$ of the sampling inequality in $\cF ^2$, whereas the upper
bound is $4 \,\mathrm{erfc}\, (\tau /\sqrt{2})\inv B$ depends also on
the additional parameter $\tau $. 


\begin{cor} \label{mzsampcount}
  For every $\epsilon >0$ there exist \mz\ families $(\Lambda _n)$ for
  $\pln $ in $\cF ^2$ with $\# \Lambda _n \leq (1+\epsilon ) (n+1)$
  points. 
\end{cor}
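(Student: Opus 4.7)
The plan is to apply Theorem~\ref{samptomz} to a sampling lattice whose Beurling density is only slightly above the critical value~$1$. By the characterization of sampling sets recalled at the start of Section~4, any uniformly separated $\Lambda\subseteq\bC$ with $D^-(\Lambda)>1$ is sampling for $\cF^2$. Given $\epsilon>0$, I would fix $\delta\in(0,\epsilon)$ and take $\Lambda=\alpha\bZ^2$ with $\alpha=(1+\delta)^{-1/2}$: this is a uniformly separated lattice of density $1/\alpha^2=1+\delta>1$, hence sampling for $\cF^2$.

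Next I would invoke Theorem~\ref{samptomz}: for $\tau>0$ large enough (depending on the sampling constants of $\Lambda$), the truncations $\Lambda_n=\Lambda\cap B_{\rho_n}$ with $\pi\rho_n^2=n+\sqrt{n}\tau$ form an \mz\ family for $\pln$ in $\cF^2$. The remaining task is to bound $\#\Lambda_n$. Since $\Lambda$ is a square lattice with fundamental cell of area $\alpha^2$, the classical lattice point count (Gauss circle problem) gives
$$\#\Lambda_n=\frac{\pi\rho_n^2}{\alpha^2}+O(\rho_n)=(1+\delta)\bigl(n+\sqrt{n}\tau\bigr)+O(\sqrt{n}),$$
with implicit constants depending only on $\alpha$ and $\tau$. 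Comparing with the target,
$$(1+\epsilon)(n+1)-\#\Lambda_n\geq (\epsilon-\delta)n-(1+\delta)\sqrt{n}\tau-C\sqrt{n}+(1+\epsilon),$$
and since $\epsilon-\delta>0$, the linear term dominates the $O(\sqrt{n})$ corrections for all sufficiently large $n$, yielding $\#\Lambda_n\leq(1+\epsilon)(n+1)$ once $n\geq n_0$.

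I do not expect any substantive obstacle here: the corollary is essentially a bookkeeping consequence of Theorem~\ref{samptomz} together with the existence of uniformly separated sampling sets for $\cF^2$ of density arbitrarily close to~$1$ from above. The only mild subtlety is that the radius $\rho_n$ in Theorem~\ref{samptomz} overshoots $\sqrt{n/\pi}$ by an additive $\sqrt{n}\tau$ correction coming from the value of $\tau$ needed for sampling; however, since this correction is $o(n)$ it is absorbed into the slack $(\epsilon-\delta)n$, so one simply chooses $\delta<\epsilon$ and takes $n_0$ large enough accordingly. Note that the asymptotic nature of the definition of \mz\ family in~\eqref{eq:k1} allows us to ignore small $n$ (e.g., setting $\Lambda_n=\emptyset$ there), which is compatible with the counting bound being required only for $n\geq n_0$.
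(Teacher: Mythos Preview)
Your proposal is correct and follows essentially the same approach as the paper: pick a sampling set for $\cF^2$ with density just above the critical value~$1$, apply Theorem~\ref{samptomz}, and then count lattice points in the truncated disk. The paper works with a general uniformly discrete set satisfying $D^-(\Lambda)>1$ and $D^+(\Lambda)<1+\mu$ and bounds $\#\Lambda_n$ directly via the upper density, whereas you specialize to the concrete lattice $\alpha\bZ^2$ and invoke the Gauss circle estimate; this is a cosmetic difference rather than a different route.
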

\begin{proof}
Choose $\mu , \delta   $ small enough, so that $(1+2\mu ) (1+\delta ) <
1+\epsilon $.   Let $\Lambda \subseteq \bC  $ be a (uniformly) discrete subset  with
  $D^-(\Lambda )>1$ and $D^+(\Lambda ) < 1+\mu $. Then $\Lambda $ is a
  sampling set for $\cF $ by the characterizations of
  Lyubarskii~\cite{lyub92} and Seip~\cite{seip92,seip-wallsten}, and for $\pi \rho
  _n 
  ^2 = n+ \sqrt{n}\tau $  the sets $\Lambda \cap B_{\rho _n}$ form a
  \mz\ family for $\pln $. For  $n$ large enough and  $\tau/\sqrt{n}
  <\delta $, 
we  find that 
  $$
  \# \Lambda _n = \# (\Lambda \cap B_{\rho _n}) \leq (1+2\mu )
  |B_{\rho _n}| = (1+2\mu ) (n+ \sqrt{n}\tau )  < (1+\epsilon )
  (n+1)\, .
  $$
\end{proof}

For a \mz\ family for $\pln$ we need at least $\mathrm{dim} \, \pln =
n+1$ points in each layer $\Lambda _n$. The construction above yields
\mz\ families for Fock space with  nearly optimal cardinality.

\vspace{2mm}


\section{ \mz\ inequalities imply sampling}

We first formulate a few properties of the distribution of \mz\ families. 

\begin{lemma} \label{distr}
Assume that $(\Lambda _n)$ is a \mz\ family for $\cF ^2$ with bounds
$A,B$. Let $\varepsilon >0$ and   $\pi \sigma _n^2 = n(1-\varepsilon)$. 

(i)  Then 
 $\# (\Lambda _n \cap B_{\sigma _n}^c) \leq B
 \frac{n+1}{(1-\varepsilon)^n}$. This holds also for $\epsilon =0$. 

(ii)  
Let   $ B(z,\rho )$ be 
a disc in $B_{\sigma _n}$. Then
$$
\# (\Lambda _n \cap B(z,\rho  )) \leq C \, .
$$

Consequently, every  $\Lambda _n\cap B_{\sigma 
    _n}$  
  is a union of at most $L$
  separated sets with uniform separation $\delta >0$ independent of
  $n$.

\end{lemma}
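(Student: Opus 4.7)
The plan is to apply the \mz\ upper inequality with two different test polynomials, one for each part.

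For (i), I test with $p(z)=z^n$, whose Fock norm is $\|z^n\|_{\cF^2}^2=n!/\pi^n$. The \mz\ upper bound reads
\[
\sum_{\lambda\in\Lambda_n}\frac{|\lambda|^{2n}}{k_n(\lambda,\lambda)}\le B\,\frac{n!}{\pi^n}.
\]
The key step is an elementary pointwise estimate for the diagonal kernel: with $x=\pi|\lambda|^2$, the summands $x^k/k!$ in $k_n(\lambda,\lambda)=\sum_{k=0}^n x^k/k!$ are maximized over $0\le k\le n$ at $k=\min(n,\lfloor x\rfloor)$, so $k_n(\lambda,\lambda)\le (n+1)\max(n,x)^n/n!$. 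For $x\ge n(1-\varepsilon)$ one has $\max(n,x)^n\le x^n/(1-\varepsilon)^n$, hence
\[
\frac{|\lambda|^{2n}}{k_n(\lambda,\lambda)}\;\ge\;\frac{n!(1-\varepsilon)^n}{\pi^n(n+1)}\qquad\text{whenever }|\lambda|\ge\sigma_n.
\]
Restricting the \mz\ sum to $\Lambda_n\cap B_{\sigma_n}^c$ and dividing gives the claim; the $\varepsilon=0$ case is automatic since then only the branch $x\ge n$ enters.

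For (ii), fix a reference point $\lambda_0\in\Lambda_n\cap B(z,\rho)$ and test with the normalized reproducing kernel $p(w)=k_n(w,\lambda_0)/k_n(\lambda_0,\lambda_0)^{1/2}$, which has unit norm in $\cF^2$. The \mz\ upper bound yields
\[
\sum_{\lambda\in\Lambda_n}\frac{|k_n(\lambda,\lambda_0)|^2}{k_n(\lambda,\lambda)\,k_n(\lambda_0,\lambda_0)}\le B.
\]
Since $B(z,\rho)\subset B_{\sigma_n}$, every point involved lies in the bulk. Corollary~\ref{kernequ} gives $k_n(w,w)\asymp e^{\pi|w|^2}$ uniformly on $B_{\sigma_n}$, while Lemma~\ref{offd} (applied with parameter $\varepsilon/2$ and $\tau=2\rho$) together with Proposition~\ref{asympt1}(iii) shows $\Gamma(n+1,\pi\lambda\bar{\lambda_0})/n!\to 1$ uniformly, so that $k_n(\lambda,\lambda_0)\sim e^{\pi\lambda\bar{\lambda_0}}$. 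The summand therefore satisfies
\[
\frac{|k_n(\lambda,\lambda_0)|^2}{k_n(\lambda,\lambda)\,k_n(\lambda_0,\lambda_0)}\;\gtrsim\;e^{-\pi|\lambda-\lambda_0|^2}\;\ge\;e^{-4\pi\rho^2},
\]
and restricting the sum to $\lambda\in\Lambda_n\cap B(z,\rho)$ bounds the cardinality by a constant $C=C(\rho,\varepsilon,B)$ independent of $n$ and $z$.

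The relatively-separated conclusion follows from a standard covering/greedy-coloring argument: for any target separation $\delta$, cover $B_{\sigma_n}$ by translates of a disc of radius $\delta$, each containing at most $C$ points of $\Lambda_n$ by (ii); a greedy coloring then partitions $\Lambda_n\cap B_{\sigma_n}$ into $L=L(\delta,C)$ color classes in which any two points are at distance $\ge\delta$.

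The main obstacle is the kernel work in (ii): one must verify that Corollary~\ref{kernequ} and Lemma~\ref{offd} produce uniform bounds throughout $B_{\sigma_n}$. Since $\pi|w|^2\le n(1-\varepsilon)$ lies strictly inside both thresholds, this is essentially a bookkeeping issue and the resulting constants depend only on $\varepsilon$ and $\rho$, not on $n$.
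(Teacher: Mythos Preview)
Your proof is correct and follows essentially the same route as the paper: in (i) you test with the top monomial $z^n$ and use the pointwise bound $k_n(\lambda,\lambda)\le (n+1)\max(n,\pi|\lambda|^2)^n/n!$, and in (ii) you test with a normalized reproducing kernel and invoke Lemma~\ref{offd} together with the diagonal asymptotics to bound each summand below by a constant times $e^{-\pi|\lambda-\lambda_0|^2}$. The only cosmetic difference is that the paper centers the kernel at the disc center $z$ rather than at a point $\lambda_0\in\Lambda_n\cap B(z,\rho)$, which yields $e^{-\pi\rho^2}$ instead of your $e^{-4\pi\rho^2}$; this is immaterial, and your added covering/greedy-coloring remark for the ``union of separated sets'' conclusion is a helpful explicit justification that the paper leaves implicit.
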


  \begin{proof}
  (i)   For $\pi |w|^2
\geq n$ and $k\leq n$,  we have 
$$
\frac{(\pi |w|^2)^k}{k!} \leq \frac{(\pi |w|^2)^n}{n!} \, ,
$$
so the reproducing kernel satisfies the estimate 
\begin{equation}
  \label{eq:w13}
k_n(w,w) = \sum _{k=0}^n \frac{(\pi |w|^2)^k}{k!} \leq (n+1)
\frac{(\pi |w|^2)^n}{n!} \, . 
\end{equation}

If $\pi|z|^2\ge (1-\varepsilon) n$, then, using
$\sqrt{1-\varepsilon}\, w = z$, we have
$\pi |w|^2 \geq n$,  and as before we obtain:
\[
 k_n(z,z) \le \frac{n+1}{(1-\varepsilon)^n} \frac{(\pi |z|^2)^n}{n!},
\]

To estimate $\# (\Lambda _n \cap B_{\sigma _n}^c)$, we test the \mz\
inequalities for the monomial $p_n(z) = \frac{\pi 
  ^{n/2}}{n!^{1/2}} z^n$. Then $\|p_n\|\fc = 1$. 
  \eqref{eq:w13} implies that 
$$
\frac{|p_n(\lambda )|^2}{k_n(\lambda ,\lambda )} \geq 
\frac{(1-\varepsilon)^n}{n+1}
\qquad \pi |\lambda |^2 \geq n(1-\varepsilon) \, ,
$$
and therefore 
 \begin{align*}
 \frac{(1-\varepsilon)^n}{n+1} \# (\Lambda _n \cap B_{\sigma _n}^c) \leq    
\sum _{\lambda \in \Lambda _n : \pi |\lambda |^2 \geq n(1-\varepsilon)}
   \frac{|p_n(\lambda )|^2}{k_n(\lambda ,\lambda )} \leq B \|p_n\|\fc = B  
 \end{align*}

 we obtain $ \# (\Lambda _n \cap B_{\sigma_n}^c) 
\leq \frac{B(n+1)}{(1-\varepsilon)^n} $.


(ii) 
Let $\kappa _{n,z}(w) = k_n(w,z)/k_n(z,z)^{1/2}$ be the
normalized reproducing kernel of $\pln $ and $ B(z,\rho )\subseteq
B_{\sigma _n}$ be an arbitrary  disc inside $B_{\sigma _n}$. Recall that $k_n(z,w) = e^{\pi \bar{z}w}
\Gamma (n+1,\pi \bar{z}w)/n!$ and that for $\pi |z|^2 \leq
n(1-\epsilon )$ we have
$\Gamma (n+1,\pi |z|^2)/n! \geq 1/2$ by Proposition~\ref{asympt1}(v). 
So after substituting the formulas for the kernel, we obtain
\begin{align*}
  \sum _{\lambda \in \Lambda _n \cap B(z,\rho)} \frac{|\kappa _{n,z}(\lambda
  )|^2}{k_n(\lambda,\lambda)}&=   \sum _{\lambda \in \Lambda _n \cap B(z,\rho)}
 \frac{|k_n(z,\lambda)|^2}{k_n(z,z) k_n(\lambda,\lambda )}\\
&=   \sum _{\lambda \in \Lambda _n \cap B(z,\rho)} |e^{\pi \bar{z}\lambda }|^2
e^{-\pi |z|^2}e^{-\pi |\lambda|^2} \frac{|\Gamma (n+1,\pi  \bar{z}\lambda)|^2}{\Gamma 
(n+1,\pi |z|^2) \Gamma (n+1,\pi |\lambda |^2)}\\
&=  \sum _{\lambda \in \Lambda _n \cap B(z,\rho )} e^{-\pi |\lambda -z|^2}
\frac{|\Gamma (n+1,\pi  \bar{z}\lambda)|^2}{\Gamma 
(n+1,\pi |z|^2) \Gamma (n+1,\pi |\lambda |^2)} = (*)\, .
\end{align*}
Now note that by Lemma~\ref{offd} $|\Gamma (n+1,\pi
\bar{z}\lambda)|^2/n! \geq 1/4$ for $n$ large, whereas $|\Gamma
(n+1,\pi  |z|^2)/n! \leq 1$, so that the last sum finally is bounded
below by
$$
(*) \geq e^{-\pi \rho ^2} \sum _{\lambda \in \Lambda _n \cap B(z,\rho) }
\frac{1}{4} = \frac{1}{4} \, e^{-\pi \rho ^2} \, \# (\Lambda _n \cap B
)\, . 
$$
Reading backwards,  we obtain
$$
\frac{1}{4} \, e^{-\pi \rho ^2} \, \# (\Lambda _n \cap B ) \leq  \sum _{\lambda \in 
\Lambda _n \cap B} \frac{|\kappa _{n,z}(\lambda
)|^2}{k_n(\lambda,\lambda)} \leq B \|\kappa _{n,z}\|_{\cF }  ^2 = B \,
, 
$$
which was claimed. 
\end{proof}




For completeness we mention  that the number of  points in the
transition region  $C_{n,\tau } = \{ z\in \bC : n-\sqrt{n}\tau \leq
 \pi |z|^2 \leq n+\sqrt{n} \tau \}$ is bounded by 
$$
\# (\Lambda _n \cap C_{n,\tau} ) \lesssim  \sqrt{n} e^{\tau ^2/2} \, .
$$
This can be shown as above by testing against the monomial $z^n$.

Before stating our main theorem, we 
recall that a sequence of sets $\Lambda _n\subseteq \bC $ converges weakly to  
$\Lambda \subseteq \bC  $, if  for all compact disks  $B\subseteq \bC $
$$
\lim _{n\to \infty } d\big((\Lambda _n \cap B) \cup \partial B, (\Lambda  \cap B) \cup
\partial B \big) = 0 \, , 
$$
where $d(\cdot , \cdot )$ denotes the Hausdorff distance between two
compact sets in $\bC$.  If every  $\Lambda _n$ is the union of at most $K$
uniformly separated sets with fixed separation $\delta $,  then 
\begin{equation}
  \label{eq:fin1}
\sum _{\lambda \in \Lambda _n \cap B}
\frac{|f(\lambda)|^2}{k(\lambda,\lambda )} \to \sum _{\lambda \in \Lambda  \cap B}
\frac{|f(\lambda)|^2}{k(\lambda,\lambda )} m(\lambda )\, ,
\end{equation}
with multiplicities $\mu (\lambda ) \in \{1, \dots , K\}$. 

\begin{tm} \label{mzsampa}
  Assume that
  $(\Lambda _n)$ is a \mz\ family for the polynomials $\pln $ in
  $\cF ^2$. Let $\Lambda $ be a weak limit of $(\Lambda
  _n)$ or of some subsequence $(\Lambda _{n_k})$.
  Then $\Lambda $ is a sampling set for $\cF ^2 $. 
\end{tm}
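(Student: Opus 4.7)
The plan is to apply the MZ inequality to polynomial approximants of $f\in\cF^2$, localize the sum via weak convergence of $\Lambda_n$ and the asymptotic behavior of $k_n$, and control the tails with Stirling-type estimates. The upper sampling bound is easy: by Lemma~\ref{distr}(ii), each $\Lambda_n\cap B_{\sigma_n}$ (with $\pi\sigma_n^2=n(1-\varepsilon)$) is a union of at most $L$ sets with mutual separation $\delta>0$, where $L,\delta$ are independent of $n$; since Hausdorff convergence is tested on compact disks and $\sigma_n\to\infty$, the weak limit $\Lambda$ inherits this structure, and the subharmonicity estimates~\eqref{eq:r4}--\eqref{eq:r6} yield $\sum_{\lambda\in\Lambda}|f(\lambda)|^2 e^{-\pi|\lambda|^2}\leq B'\|f\|_{\cF^2}^2$.

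For the lower bound, fix $f\in\cF^2$ and set $q_m=P_m f\in\cP_m$, so that $q_m\to f$ in $\cF^2$ and $\|q_m\|_{\cF^2}\to\|f\|_{\cF^2}$. For $n\geq m$ we have $q_m\in\pln$, and the MZ lower inequality gives
$$A\|q_m\|_{\cF^2}^2\leq\sum_{\lambda\in\Lambda_n}\frac{|q_m(\lambda)|^2}{k_n(\lambda,\lambda)}.$$
Fix $\varepsilon\in(0,1-e^{-1/2})$ and a large $R>0$, and split the right-hand side into three regions: (a) $|\lambda|\leq R$; (b) $R<|\lambda|$ with $\pi|\lambda|^2\leq n(1-\varepsilon)$; (c) $\pi|\lambda|^2>n(1-\varepsilon)$. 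On (a), Proposition~\ref{asympt1}(iv) gives $k_n(\lambda,\lambda)\to e^{\pi|\lambda|^2}$ uniformly on $B_R$, and the weak Hausdorff convergence $\Lambda_n\cap B_R\to\Lambda\cap B_R$ combined with~\eqref{eq:fin1} passes this sum to $\sum_{\lambda\in\Lambda\cap B_R}\mu(\lambda)|q_m(\lambda)|^2 e^{-\pi|\lambda|^2}$ with multiplicities $\mu(\lambda)\in\{1,\dots,L\}$. On (b), Proposition~\ref{asympt1}(iii) gives $k_n(\lambda,\lambda)\gtrsim e^{\pi|\lambda|^2}$ uniformly, so by~\eqref{eq:r6} this contribution is dominated by $C\int_{|z|>R-\delta}|q_m(z)|^2 e^{-\pi|z|^2}\,dm(z)$, which vanishes as $R\to\infty$ for fixed $m$.

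The main obstacle is region (c), where $k_n(\lambda,\lambda)$ is small and Lemma~\ref{distr}(i) only gives the exponentially growing count $\#(\Lambda_n\cap B_{\sigma_n}^c)\leq B(n+1)/(1-\varepsilon)^n$. I would combine the reproducing kernel inequality $|q_m(\lambda)|^2\leq\|q_m\|_{\cF^2}^2\, k_m(\lambda,\lambda)$ with the leading-term bound $k_n(\lambda,\lambda)\geq(\pi|\lambda|^2)^n/n!$ and Stirling's formula to obtain, for $\pi|\lambda|^2>n(1-\varepsilon)$,
$$\frac{k_m(\lambda,\lambda)}{k_n(\lambda,\lambda)}\lesssim\frac{(m+1)\sqrt{n}\,n^m e^{-n}}{m!\,(1-\varepsilon)^{n-m}}.$$
Multiplying by the cardinality bound yields a total contribution from (c) of order $n^{m+1}\sqrt{n}\,e^{-n}/(1-\varepsilon)^{2n-m}$, which decays exponentially precisely because $\varepsilon<1-e^{-1/2}$ forces $e(1-\varepsilon)^2>1$. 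Combining the three regions and sending $n\to\infty$ then $R\to\infty$ yields $A\|q_m\|_{\cF^2}^2\leq L\sum_{\lambda\in\Lambda}|q_m(\lambda)|^2 e^{-\pi|\lambda|^2}$. Letting finally $m\to\infty$ and invoking the upper sampling bound applied to $q_m-f$ (which gives $\ell^2$-convergence of the sampling values) transfers the limit to the right-hand side, and one concludes $(A/L)\|f\|_{\cF^2}^2\leq\sum_{\lambda\in\Lambda}|f(\lambda)|^2 e^{-\pi|\lambda|^2}$, establishing $\Lambda$ as a sampling set for $\cF^2$.
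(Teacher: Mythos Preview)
Your argument is correct and follows essentially the same three-region decomposition as the paper: both split the MZ sum at a fixed radius $R$ (or $\nu+\delta$) and at the bulk boundary $\sigma_n$ with $\pi\sigma_n^2=n(1-\varepsilon)$, use weak convergence plus $k_n\asymp k$ on region~(a), the tail estimate~\eqref{eq:r6} on region~(b), and the reproducing-kernel bound $|q_m(\lambda)|^2\le\|q_m\|^2 k_m(\lambda,\lambda)$ together with Lemma~\ref{distr}(i) on region~(c), arriving at the same threshold $(1-\varepsilon)^2>1/e$. The only cosmetic differences are that the paper works directly with a fixed polynomial and extends by density (rather than projecting a general $f$ and letting $m\to\infty$), and bounds $k_N/k_n$ via the integral inequality $\Gamma(n+1,R)\ge R^{n-N}\Gamma(N+1,R)$ instead of your leading-term estimate $k_n(\lambda,\lambda)\ge(\pi|\lambda|^2)^n/n!$; both yield the same final bound.
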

\begin{proof}
The assumption that  $\Lambda_n$ is a Marcinkiewicz Zygmund family for
$\pln$ in $\cF ^2$  means that  there exist   $A,B> 0$ such that 
$A\|p\| \fc \leq \sum _{\lambda \in \Lambda _n} \frac{|p(\lambda
    )|^2}{k_n(\lambda,\lambda)}\leq B\|p\|^2\fc  $ for all polynomials
  $p\in \cP _n$.
  
(i) Let $B= \bar{B}(w,\rho )$ be a closed  disc. 
 By Lemma~\ref{distr}(ii) $\# (\Lambda _n \cap B(w,\rho )) \leq C$ for
some constant $C$ independent of $n$ and $B$,  provided that  $n$ is big enough. Since 
$\Lambda $ is a
weak limit of $\Lambda _n$, we know that $\# (\Lambda \cap \bar{B}(w,\rho)) \leq
C$. This means that $\Lambda $ is a union of $K$ uniformly separated
sets with separation $\delta >0$.

(ii) It follows immediately from \eqref{eq:r6} that $\Lambda $
satisfies the upper bound in the sampling inequality for $\cF ^2$. 

(iii) \textbf{Lower bound.} Fix a polynomial $p\in \cP _N$ (of degree $N$) and 
choose $r>0$
such that
$$
\int _{|z| \geq \sqrt{r/\pi } } |p(z)|^2 e^{-\pi |z|^2} \, dm(z) <
\frac{A}{4 c_\delta K} \|p\|_{\fc } ^2 \, ,
$$
where $c_\delta $ is the constant in ~\eqref{eq:r4} for separation
$\delta $. To avoid the ugly notation in subscript, we write $\nu =
\sqrt{r/\pi }$, $\rho _n = \sqrt{n/\pi }$, and $\sigma_n = 
\sqrt{n(1-\varepsilon)/\pi}$.   

For $p\in \cP _N$ the \mz\ inequalities are satisfied for every $n\geq
N$, therefore
\begin{align*}
  A\|p\|_{\fc } ^2 &\leq \sum _{\lambda \in \Lambda _n}  \frac{|p(\lambda
                     )|^2}{k_n(\lambda,\lambda)} \\
 &=  \sum _{\lambda \in \Lambda _n, |\lambda| <\nu +\delta } \frac{|p(\lambda
                 )|^2}{k_n(\lambda,\lambda)} \\
  &\qquad + \sum _{\lambda \in \Lambda _n, \nu +\delta \leq
      |\lambda| < \sigma_n } \frac{|p(\lambda
    )|^2}{k_n(\lambda,\lambda)} + \sum _{\lambda \in \Lambda _n,
      |\lambda | \geq \sigma_n   } \frac{|p(\lambda
    )|^2}{k_n(\lambda,\lambda)} = \\
   & = A_n + B_n + C_n \, .
\end{align*}
If $ |\lambda | \leq \sigma _n$, then $k_n(\lambda ,\lambda ) \geq
\frac{1}{2} k(\lambda,\lambda) = \frac{1}{2} e^{\pi |\lambda|^2}$ as a
consequence of Lemma~\ref{rkf} and Proposition~\ref{asympt1}(v) 
 Thus in the expressions for $A_n$ and $B_n$ we
may replace the kernel $k_n$ for polynomials by the kernel $k(z,z) =
e^{\pi |z|^2}$ for Fock space.
Consequently
\begin{align} \label{eq:fin2}
A \|p\|^2 \fc \leq 2  \sum _{\lambda \in \Lambda _n , |\lambda|
  \leq \nu +\delta } |p(\lambda )|^2 e^{-\pi |\lambda
  |^2} +B_n+C_n. 
\end{align}
Since in this sum   all points $\lambda$ lie in the compact set
$\bar{B}(0,\nu +\delta )$, 
the  weak convergence  (including
multiplicities $m(\lambda ) \in \{1, \dots, K\}$) implies 
the convergence  to $\Lambda$ and 
\begin{align*}
\lim _{n\to \infty } A_n &\leq 2  \lim_{n\to\infty}\sum_{\lambda \in \Lambda _n 
, |\lambda | \leq
  \nu +\delta } |p(\lambda )|^2 e^{- \pi|\lambda|^2} = \\
    & = 2     \sum_{\lambda \in \Lambda \cap \overline{B_{\nu +\delta }}} 
|p(\lambda
    )|^2 e^{- \pi|\lambda|^2} m(\lambda) 
\end{align*}
For the term  $B_n$, we recall  that every  $\Lambda_n \cap
B_{\sigma _n} $ is  a finite union of at most $K$ uniformly separated
sequences with separation $\delta $ and apply the tail estimate~\eqref{eq:r6}. Our
choice of $r$ and $\nu =
\sqrt{r/\pi }$   yields
\begin{align*}
B_n &\leq 2  \sum _{\lambda \in \Lambda _n, \nu+\delta \leq |\lambda| < \sigma_n 
} \frac{|p(\lambda  )|^2}{k(\lambda,\lambda)} \le 2 c_\delta K \int_{|z|>
    \nu  } |p(z)|^2 e^{-\pi |z|^2}\, dm(z) \\ & \leq 2 c_\delta K  \frac{A}{4
    c_\delta K} \|p\|\fc ^2  = \frac{A}{2} \|p\|\fc  ^2\,
  .
\end{align*}

To treat $C_n$, recall that $p$ has degree $N<n$. We use the trivial
estimate 
$$
|p(\lambda )|^2 = |\langle p, k_N (\lambda , \cdot )\rangle
|^2 \leq \|p\|_{\fc } ^2 k_N(\lambda , \lambda )
$$
and substitute into $C_n$ to obtain
$$
C_n = \sum _{\lambda \in \Lambda _n,  |\lambda| > \sigma _n} \frac{|p(\lambda
    )|^2}{k_n(\lambda,\lambda)} \leq \|p\|_{\fc } ^2 \, \# (\Lambda _n\cap 
B_{\sigma_n}^c)
   \sup _{|z| \geq \sigma _n} \frac{k_N(z,z)}{k_n(z,z)}
  \, .
$$
By Lemma~\ref{distr}(ii) $\# \Lambda _n\cap B_{\sigma_n}^c
    \leq \frac{B(n+1)}{(1-\varepsilon)^n}$, whereas the ratio of the
  different reproducing kernels is
$$
  \frac{k_N(z,z)}{k_n(z,z)} = \frac{e^{\pi |z|^2} \Gamma (N+1, \pi
    |z|^2) n!}{e^{\pi |z|^2} \Gamma (n+1, \pi |z|^2) N!} \, .
  $$
  For simplicity set $\pi |z|^2=R>n(1-\varepsilon)$. Then
  \begin{align*}
    \Gamma (n+1,R) &= \int _R ^\infty t^n e^{-t} \, dt \\
    &\geq R^{n-N} \int _R ^\infty t^N e^{-t} \, dt \\
&= R^{n-N} \Gamma(N+1,R) \, ,
  \end{align*}
  so that
  $$
  \sup _{\pi |z|^2 >n(1-\varepsilon)}    \frac{k_N(z,z)}{k_n(z,z)}  \leq 
(n(1-\varepsilon))^{N-n}\frac{n!}{N!} \, .
  $$
  Altogether
  $$
  C_n \leq \|p\|_{\fc } ^2 \frac{B(n+1)}{(1-\varepsilon)^n}
(n(1-\varepsilon))^{N-n}\frac{n!}{N!}\to 0 \, ,
  $$
  as $n\to \infty $ by Stirling's formula, provided that  we choose $\varepsilon$ such that 
$(1-\varepsilon)^2 > 1/e$.

Combining the estimates for $A_n,B_n$, and $C_n$ and letting $n$ go to
$\infty $, we obtain the lower sampling inequality
\begin{align*}
\sum _{\lambda \in \Lambda } |p(\lambda )|^2 m(\lambda ) e^{-\pi
  |\lambda |^2} &\geq  \sum _{\lambda \in \Lambda , |\lambda | \leq \nu +\delta 
} 
|p(\lambda )|^2 m(\lambda ) e^{-\pi
  |\lambda |^2} - \limsup _{n\to \infty } B_n - \lim _{n\to \infty } C_n
\geq\\ 
&^\geq \frac{A}{2}\|p\|\fc ^2 \, .
\end{align*}
As the multiplicities satisfy $1\leq m(\lambda ) \leq K$ for $\lambda \in
\Lambda $, we may omit them by changing the lower sampling constant to
$A/(2K)$. 

Since polynomials are dense in $\cF ^2$, this estimate extends to all
of $\cF ^2$. 
\end{proof}

\section{Uniform Interpolation}

In a sense the dual problem to sampling  is the  interpolation of
function values. A set $\Lambda \subseteq \bC  $
is interpolating for  $\cF ^2$, if for every $a=(a_\lambda
)_{\lambda\in\Lambda } \in \ell ^2(\Lambda )$ there exists $f\in \cF
^2$, such that $f(\lambda ) e^{-\pi |\lambda |^2/2} = a_\lambda $.
Equivalently, the set of normalized reproducing kernels
$\kappa _\lambda = k_\lambda /  \|k_\lambda \|_{\fc} = k_\lambda
/ k(\lambda,\lambda )^{1/2}$ is a Riesz
sequence, i.e., there exists $A,B>0$, such that
\begin{equation}
  \label{eq:re1a}
 A \|a\|_2^2  \leq \| \sum _{\lambda \in \Lambda } a_\lambda  \kappa _\lambda \|_{\fc
}^2  \leq B \|a\|_2^2
\end{equation}
for all $a\in \ell ^2(\Lambda )$. It suffices to  require
\eqref{eq:re1a} only for all $a$ with finite support.

In analogy to \mz\ families for sampling, we define uniform families for
interpolation as follows. We denote the normalized reproducing kernels
in $\pln $ by $\kappa _{n,\lambda} = k_{n,\lambda } / \|k_{n,\lambda
}\|_{\fc} $. 
\begin{definition}
  A sequence of finite sets $\Lambda _n\subseteq \bC $ is a uniform interpolating family for $\pln $ in $\cF ^2 $,
  if there exist  constants $A,B >0$ independent of $n$, such that for
  $n$ large enough, $n\geq n_0$, 
  \begin{equation}
    \label{eq:re2}
A \|a\|_2^2 \leq     \| \sum _{\lambda \in \Lambda _n } a_\lambda  \kappa _{n,\lambda} \|_{\fc
}^2 \leq B \|a\|_2^2   \qquad \text{ for all } a\in \ell ^2(\Lambda _n)
\, .
\end{equation}
\end{definition}
Equivalently, for every $a\in \ell ^2(\Lambda _n)$ there exists a
polynomial $p\in \pln $, such that
\begin{align*}
\frac{p(\lambda )}{k_n(\lambda,\lambda)^{1/2}} = a_\lambda \qquad
                                                 \text{ and } \qquad 
\|p\|_{\fc } ^2  \leq A \|a\|_2^2 \, .
\end{align*}
A further equivalent condition is that the associated Gram matrix
with entries $G_{\mu ,\lambda} = \langle \kappa _{n,\lambda},
\kappa _{n,\mu }\rangle$ has the smallest eigenvalues $\lambda
_{min} \geq A$~\cite[\S 2.3 Lem.~2]{meyer92}.

The relation between sets of interpolation for $\cF ^2 $ and uniform
interpolating families is similar to the case of sampling.

\begin{tm}\label{mzint}
    Assume that $\Lambda \subseteq \bC $ is a  set of interpolation for $\cF ^2
    $.  For $\tau >0$ and $\epsilon >0$ set $\rho _n$, such that $\pi \rho _n^2 =
  n-\sqrt{n} (\sqrt{2\log n} + \tau) $. Then for every   $\tau >0$ large 
enough,   the sets $\Lambda _n = 
  \Lambda \cap  B_{\rho_n} $  form a uniform interpolating family for $\pln $ in
  $\cF ^2$.
\end{tm}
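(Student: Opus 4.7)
The plan is to exploit the identity $k_n(\cdot,\lambda) = P_n\,k(\cdot,\lambda)$, where $P_n\colon\cF^2\to\pln$ is the orthogonal projection, so that the uniform interpolation problem for polynomials is transferred to a projected interpolation problem in $\cF^2$.  Writing $\eta_n(s) = \Gamma(n+1,\pi s)/n!$, Lemma~\ref{rkf} gives $\|k_n(\cdot,\lambda)\|\fc^2 = e^{\pi|\lambda|^2}\eta_n(|\lambda|^2)$, so $\kappa_{n,\lambda} = \eta_n(|\lambda|^2)^{-1/2} P_n\kappa_\lambda$.  Introducing the rescaled coefficients $b_\lambda = a_\lambda/\sqrt{\eta_n(|\lambda|^2)}$ one obtains
\begin{equation*}
\sum_{\lambda\in\Lambda_n} a_\lambda\kappa_{n,\lambda} \;=\; P_n\!\sum_{\lambda\in\Lambda_n} b_\lambda\kappa_\lambda \;=\; P_n f\, .
\end{equation*}
For $\lambda\in\Lambda_n$, the choice $\pi\rho_n^2 = n-\sqrt{n}(\sqrt{2\log n}+\tau)$ and Proposition~\ref{asympt1}(iii) applied with $\tau' = \sqrt{2\log n}+\tau$ yield
\begin{equation*}
1-\eta_n(|\lambda|^2)\;\leq\; e^{-(\sqrt{2\log n}+\tau)^2/2}\;\leq\; n^{-1}e^{-\tau^2/2}\, ,
\end{equation*}
so $\eta_n(|\lambda|^2)$ is very close to $1$ and $\|a\|_2\asymp\|b\|_2$ with constants tending to $1$ as $\tau\to\infty$.

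Since $\Lambda$ is interpolating for $\cF^2$, the normalized kernels $\{\kappa_\lambda\}_{\lambda\in\Lambda}$ form a Riesz sequence with bounds $A_0,B_0>0$.  The upper inequality in \eqref{eq:re2} is immediate from $\|P_n f\|\fc^2\leq\|f\|\fc^2\leq B_0\|b\|_2^2\lesssim\|a\|_2^2$.  The lower inequality is the real task, and the natural line of attack is to split $\|P_n f\|\fc^2 = \|f\|\fc^2 - \|P_n^\perp f\|\fc^2$ and show that the error term is tiny.  Expanding $\kappa_\lambda$ in the orthonormal monomial basis $\{e_k\}$, the squared coefficients $|c_k(\lambda)|^2 = e^{-\pi|\lambda|^2}(\pi|\lambda|^2)^k/k!$ form a Poisson distribution, and the identity $k_n(\lambda,\lambda) = e^{\pi|\lambda|^2}\eta_n(|\lambda|^2)$ gives
\begin{equation*}
\|P_n^\perp\kappa_\lambda\|\fc^2 \;=\; \sum_{k>n}|c_k(\lambda)|^2 \;=\; 1-\eta_n(|\lambda|^2)\;\leq\; n^{-1}e^{-\tau^2/2}\, .
\end{equation*}

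The operator $T\colon\ell^2(\Lambda_n)\to\cF^2$, $Tb = \sum_\lambda b_\lambda\kappa_\lambda$, therefore satisfies the Hilbert--Schmidt bound $\|P_n^\perp T\|^2\leq\sum_{\lambda\in\Lambda_n}\|P_n^\perp\kappa_\lambda\|\fc^2$.  Interpolating sets in $\cF^2$ are uniformly separated (Seip), hence $\#\Lambda_n\leq C\rho_n^2\leq (C/\pi)n$, and consequently
\begin{equation*}
\|P_n^\perp f\|\fc^2 \;\leq\; C_0\, e^{-\tau^2/2}\,\|b\|_2^2\, .
\end{equation*}
Combining this with the Riesz lower bound $\|f\|\fc^2\geq A_0\|b\|_2^2$ and taking $\tau$ so large that $C_0 e^{-\tau^2/2} < A_0/2$ produces $\|P_n f\|\fc^2 \geq (A_0/2)\|b\|_2^2 \gtrsim \|a\|_2^2$, which is the lower bound in \eqref{eq:re2}.

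The main obstacle, and the reason for the seemingly odd calibration of $\rho_n$, is the tension between the smallness of the individual tails $\|P_n^\perp\kappa_\lambda\|\fc^2$ and the growth of $\#\Lambda_n$.  To absorb the $O(n)$ count of points in $\Lambda_n$, the bound on $\|P_n^\perp\kappa_\lambda\|\fc^2$ must be of order $n^{-1}$ rather than merely $o(1)$.  The logarithmic correction $\sqrt{n}\sqrt{2\log n}$ in $\pi\rho_n^2$ is precisely what manufactures the saving factor $e^{-\log n}=n^{-1}$ out of Proposition~\ref{asympt1}(iii), and the remaining free parameter $\tau$ then controls the absolute constant so that it can be beaten by the Riesz lower bound $A_0$.
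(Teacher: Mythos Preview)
Your argument is correct and follows essentially the same strategy as the paper: both control the perturbation by a trace/Hilbert--Schmidt estimate, combining the tail bound $1-\eta_n(|\lambda|^2)\leq n^{-1}e^{-\tau^2/2}$ from Proposition~\ref{asympt1}(iii) with the cardinality bound $\#\Lambda_n=O(n)$. Your framing via the projection identity $\kappa_{n,\lambda}=\eta_n(|\lambda|^2)^{-1/2}P_n\kappa_\lambda$ and the coefficient rescaling $b_\lambda=a_\lambda/\sqrt{\eta_n}$ is a minor but clean variant of the paper's direct estimate of $\|\kappa_\lambda-\kappa_{n,\lambda}\|\fc^2\leq 2(1-\eta_n(|\lambda|^2))$ via the Gram matrix trace.
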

\begin{proof}

  Since $D^+(\Lambda ) <1$ is necessary for an interpolating set in
  $\cF ^2$ by ~\cite{seip92}, the definition of $\rho _n$ implies that
  $$
  \# (\Lambda \cap B_{\rho _n}) \leq 1 \cdot |B_{\rho _n}| \leq n
  $$
  for $n\geq n_0$ large enough. Consequently $\Lambda _n = \Lambda
  \cap B_{\rho _n}$ contains at most $n$ points. 
  
  We show that we can choose $\tau >0$ in such a manner that, for
  $a\in \ell ^2(\Lambda )$ with finite support and all $n\in \bN $ sufficiently large, 
  \begin{equation}
    \label{eq:re3}
\|\sum _{\lambda \in \Lambda _n} a_\lambda (\kappa _\lambda -
\kappa _{n,\lambda })\|_{\fc } ^2 \leq \frac{A}{4} \|a\|_2^2 \, .    
  \end{equation}
Then via the triangle inequality  $\tfrac{A}{4} \|a\|_2^2 \leq  \|\sum _{\lambda \in \Lambda _n} a_\lambda 
\kappa _{n,\lambda }\|_{\fc } ^2 \leq (B+ \tfrac{A}{4})  \|a\|_2^2 $. . 

Denote the difference of the kernels by $e_\lambda = \kappa _\lambda -
\kappa _{n,\lambda }$ and the Gram matrix of $e_\lambda $ by $E$
with entries $E_{\lambda , \mu } = \langle e_\mu , e_\lambda \rangle,
\lambda , \mu \in \Lambda _n$. Then \eqref{eq:re3} amounts to saying
the $\|E\|_{\mathrm{op}} \leq A/4$. Since $E$ is positive
(semi-)definite, it suffices to bound the trace of $E$. To do this,
consider the diagonal elements of $E$ first. We see that
\begin{align*}
E_{\lambda ,\lambda  } &= \|\kappa _\lambda -
\kappa _{n,\lambda }\|_{\fc } ^2 \notag \\ 
&= 2 - 2 \mathrm{Re}\, \langle \kappa_\lambda,
\kappa _{n,\lambda}\rangle \notag  \\
&= 2\Big(1-\frac{\langle k_\lambda , k_{n,\lambda }\rangle}{k(\lambda,\lambda )^{1/2} 
k_n(\lambda,\lambda )^{1/2}}\Big) \notag \\
&= 2\Big(1-\frac{ k_n (\lambda,\lambda)^{1/2}}{k(\lambda,\lambda )^{1/2} }\Big)
\, .  \notag 
\end{align*}
Since $k_n(\lambda,\lambda ) < k(\lambda,\lambda)$, the estimate for
the diagonal elements simplifies to 
$$
E_{\lambda,\lambda} \leq 2\Big(1-\frac{ k_{n
  }(\lambda,\lambda)}{k(\lambda,\lambda )}\Big) = 2\Big( 1
-\frac{\Gamma (n+1,\pi |\lambda |^2)}{n!}\Big)\, .
$$
If $x\leq n-\sqrt{n} \tau_n^2$ (with $\tau _n$ depending on $n$), then by 
Proposition~\ref{asympt1}(iii). 
$$
1-\frac{\Gamma(n+1,x)}{n!} \leq 1-\frac{\Gamma(n+1,n-\sqrt{n}\tau
  _n)}{n!} \leq  e^{-\tau _n^2/2}$$ 

Combining these observations, we arrive at 
\begin{align*}
  \|E\|_{\mathrm{op}} &\leq 2 \sum _{\lambda \in \Lambda \cap B_{\rho _n}} \Big( 1
-\frac{\Gamma (n+1,\pi |\lambda |^2)}{n!}\Big)\\
&\leq 2 n e^{-\tau _n^2/2}
\end{align*}
By choosing $\tau _n = \sqrt{2\log n} + \tau $, with $\tau >0$
 large enough,
we achieve $\|E\|_{\mathrm{op}} \leq A/4$ for $n\geq n_0$. As we have
seen, this suffices to conclude that $\kappa _{n,\lambda}$ is a Riesz
  sequence in $\pln$ with lower constant independent of the degree
  $n$. 
\end{proof}

Similar to the case of \mz\ families for sampling, we obtain uniform
families for interpolation with the correct cardinality.

\begin{cor} \label{mzintcount}
    For every $\epsilon >0$ there exist uniform interpolating families 
$(\Lambda _n)$ for
  $\pln $ in $\cF ^2$ with $\# \Lambda _n \geq (1-\epsilon ) (n+1)$
  points. 
\end{cor}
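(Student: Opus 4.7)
The plan is to argue in complete parallel with Corollary~\ref{mzsampcount}. We start with a fixed interpolating set $\Lambda \subseteq \bC$ for $\cF^2$ whose (uniform) Beurling density is as close to $1$ as we wish, invoke Theorem~\ref{mzint} to obtain a uniform interpolating family $\Lambda_n = \Lambda \cap B_{\rho_n}$, and then estimate $\#\Lambda_n$ from below.

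For the choice of $\Lambda$, by the theorem of Seip--Wallsten~\cite{seip92,seip-wallsten}, any uniformly discrete set with $D^+(\Lambda) < 1$ is interpolating for $\cF^2$. Given $\epsilon > 0$, fix $\mu \in (0,\epsilon/2)$ and take, for example, the rectangular lattice $\Lambda = \alpha\bZ + i\beta\bZ$ with $\alpha\beta = (1-\mu)^{-1}$. Then $\Lambda$ is uniformly separated and $D^+(\Lambda) = D^-(\Lambda) = 1-\mu < 1$, so $\Lambda$ is an interpolating set for $\cF^2$. Theorem~\ref{mzint} now furnishes a $\tau > 0$ such that, for $\pi \rho_n^2 = n - \sqrt{n}(\sqrt{2\log n} + \tau)$, the truncations $\Lambda_n = \Lambda \cap B_{\rho_n}$ form a uniform interpolating family for $\pln$ in $\cF^2$.

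It remains to count. Because $\Lambda$ has uniform density $1-\mu$ and is uniformly discrete, for every $\delta > 0$ and all $R$ large enough we have $\#(\Lambda \cap B_R) \geq (1-\mu-\delta)\pi R^2$. Applied with $R = \rho_n$ and $\delta$ small, together with $\pi\rho_n^2 = n - \sqrt{n}(\sqrt{2\log n} + \tau)$, this gives
\begin{equation*}
\#\Lambda_n \;\geq\; (1-\mu-\delta)\bigl(n - \sqrt{n}(\sqrt{2\log n} + \tau)\bigr).
\end{equation*}
Since $\sqrt{n \log n} = o(n)$, for $\mu + \delta < \epsilon$ and $n$ sufficiently large the right-hand side exceeds $(1-\epsilon)(n+1)$, as desired.

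The only delicate point is verifying that the lower-order correction $\sqrt{n}(\sqrt{2\log n} + \tau)$ appearing in the radius $\rho_n$ does not destroy the count; but this correction is $o(n)$ and can be absorbed by taking $\mu$ strictly smaller than $\epsilon$ and $n$ large. Everything else — existence of interpolating sets with density arbitrarily close to $1$, and the density-to-count conversion on disks — is standard and immediate from Seip's characterization.
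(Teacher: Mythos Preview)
Your proof is correct and follows exactly the approach the paper intends: it simply says ``The proof is similar to the one of Corollary~\ref{mzsampcount},'' and your argument is precisely the natural dualization---take an interpolating set with density close to $1$, truncate via Theorem~\ref{mzint}, and count. Your version is in fact more detailed than the paper's, supplying an explicit lattice and checking that the $\sqrt{n\log n}$ correction is harmless.
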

\begin{proof}
  The proof is similar to the one of Corollary~\ref{mzsampcount}. 
\end{proof}

\begin{tm}\label{mztoint}
  Assume that
  $(\Lambda _n)$ is a uniform interpolating family for the polynomials $\pln $ 
in
  $\cF ^2 $. Let $\Lambda $ be a weak limit of $(\Lambda
  _n)$ or of  some subsequence $(\Lambda _{n_k})$.
  Then $\Lambda $ is a set of interpolation for $\cF ^2 $. 
\end{tm}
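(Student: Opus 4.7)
The plan is to show that the normalized Fock reproducing kernels $\{\kappa_\lambda\}_{\lambda\in\Lambda}$ form a Riesz sequence in $\cF^2$, which by \eqref{eq:re1a} characterizes $\Lambda$ as an interpolating set. My strategy is to transfer the uniform polynomial Riesz inequality \eqref{eq:re2} to the limit using weak convergence together with the reproducing-kernel asymptotics of Section~3.

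First I would verify that $\Lambda$ is uniformly separated. Testing \eqref{eq:re2} against the two-atom sequence $a_\lambda = 1$, $a_\mu = -\bar z/|z|$ with $z = \langle \kappa_{n,\lambda}, \kappa_{n,\mu}\rangle_{\cF^2}$ at distinct points $\lambda, \mu\in\Lambda_n$ gives $|z| \leq 1-A$. For any two distinct weak-limit points $\lambda,\mu\in\Lambda$ sitting in a fixed compact disk, I select approximants $\lambda_n, \mu_n\in\Lambda_n$ and invoke the uniform convergence $\Gamma(n+1, x)/n! \to 1$ on compacta (Proposition~\ref{asympt1}(iv), extended to complex arguments via Lemma~\ref{offd}) to obtain $\langle \kappa_{n,\lambda_n},\kappa_{n,\mu_n}\rangle_{\cF^2} \to \langle \kappa_\lambda,\kappa_\mu\rangle_{\cF^2}$. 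Since the limit has modulus $e^{-\pi|\lambda-\mu|^2/2}$, the bound $1-A$ forces $|\lambda-\mu|^2 \geq -\tfrac{2}{\pi}\log(1-A) > 0$, i.e.\ $\Lambda$ is uniformly discrete.

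Next I would establish the Riesz inequalities for $\Lambda$ by finite-support approximation. Fix $a = (a_\lambda)_{\lambda\in\Lambda}$ with support $S \subseteq \Lambda\cap B(0, R)$. The separation just proved together with weak convergence yields a unique $\mu_n: S \to \Lambda_n$ with $\mu_n(\lambda)\to\lambda$ and all values distinct for $n$ large. Setting $a^{(n)}_{\mu_n(\lambda)} = a_\lambda$ preserves $\|a^{(n)}\|_2 = \|a\|_2$, and \eqref{eq:re2} delivers
$$A\|a\|_2^2 \leq \Big\|\sum_{\lambda \in S} a_\lambda \kappa_{n, \mu_n(\lambda)}\Big\|_{\cF^2}^2 \leq B\|a\|_2^2.$$
Passing $n\to\infty$ using the strong convergence $\kappa_{n,\mu_n(\lambda)}\to \kappa_\lambda$ in $\cF^2$ (see below) gives the same inequality for $\sum_{\lambda\in S} a_\lambda \kappa_\lambda$, and density of finitely supported sequences in $\ell^2(\Lambda)$ extends the Riesz bounds to all of $\ell^2(\Lambda)$.

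The main obstacle is the strong convergence $k_{n,\mu_n(\lambda)}\to k_\lambda$ in $\cF^2$. The clean resolution rests on the observation that $k_{n,\lambda}$ is precisely the orthogonal projection $P_n k_\lambda$ of $k_\lambda$ onto $\pln\subseteq \cF^2$, because both sides reproduce $p(\lambda)$ against any $p\in\pln$. Hence
$$\|k_{n,\lambda} - k_\lambda\|_{\cF^2}^2 = \|k_\lambda\|_{\cF^2}^2 - \|k_{n,\lambda}\|_{\cF^2}^2 = k(\lambda,\lambda) - k_n(\lambda,\lambda),$$
which vanishes by Proposition~\ref{asympt1}(iv). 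Combining with the norm continuity $\mu\mapsto k_\mu\in \cF^2$ (immediate from $k_\mu(z)=e^{\pi z \bar\mu}$) and the uniform convergence $k_n(w,w)\to k(w,w)$ on compacta yields $k_{n,\mu_n(\lambda)}\to k_\lambda$ strongly in $\cF^2$; normalizing by $k_n(\mu_n(\lambda),\mu_n(\lambda))^{1/2}\to k(\lambda,\lambda)^{1/2}>0$ is then harmless and closes the limit step.
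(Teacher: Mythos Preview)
Your proof is correct and follows the same overall strategy as the paper: verify the Riesz inequality for $\Lambda$ on finitely supported sequences by choosing approximants in $\Lambda_n$ and showing $\kappa_{n,\mu_n(\lambda)}\to\kappa_\lambda$ strongly in $\cF^2$. The difference lies in how the strong convergence is obtained. The paper bounds the Gram matrix of the differences $e_j=\kappa_{\lambda_j}-\kappa_{n,\lambda_j^{(n)}}$ by its trace and then computes each diagonal entry using the explicit kernel formula together with the off-diagonal estimate of Lemma~\ref{offd}. Your route is cleaner: the observation $k_{n,\lambda}=P_n k_\lambda$ gives $\|k_\lambda-k_{n,\lambda}\|_{\cF^2}^2=k(\lambda,\lambda)-k_n(\lambda,\lambda)$ directly, so Proposition~\ref{asympt1}(iv) and the norm continuity of $\mu\mapsto k_\mu$ already yield the limit without any appeal to Lemma~\ref{offd}. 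This is a genuine simplification. You also make the uniform separation of $\Lambda$ explicit, which the paper's proof leaves implicit; note however that in your two-atom test the phase should be $a_\mu=-z/|z|$ rather than $-\bar z/|z|$ in order to make the cross term $-2|z|$.
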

\begin{proof}
 Let $\Lambda $ be a weak limit of $\Lambda _n$ (or some
 subsequence), and let $a\in \ell ^2(\Lambda )$ with finite support in
 some disk $B_{\rho_N}$ say. Enumerate $\Lambda \cap B_{\rho _N} =
 \{\lambda _j: j=1, \dots , L\}$. By weak convergence,  for every
 $\lambda _j \in \Lambda \cap
 B_{\rho _N}$ there is a sequence $\lambda ^{(n)}_j\in \Lambda _n$, such
 that $\lim _{n\to \infty } \lambda _j^{(n)} = \lambda _j $.

 We  show that 
  \begin{equation}
    \label{eq:re6}
\lim _{n\to \infty }   \|\sum _{j=1}^L a_{\lambda_j} (\kappa _{\lambda _j} -
\kappa _{n,\lambda _j^{(n)} })\|_{\fc } ^2 = 0\, .    
  \end{equation}
 Consequently,
 $$
\| \sum _{\lambda \in \Lambda \cap B_{\rho _N}} a_\lambda
  \kappa _\lambda \|_{\fc } = \lim _{n\to \infty } \| \sum _{j=1}^L
  a_{\lambda _j^{(n)}}   \kappa _{n,\lambda_j^{(n)}} \|_{\fc } \geq A \|a\|_2^2
  \, ,
  $$
  because $(\Lambda _n)$ is a uniform interpolating family. Thus
  $\{\kappa _\lambda:\lambda \in \Lambda \}$ is a Riesz sequence in
  $\cF ^2 $.

  To show \eqref{eq:re6},  we set $e_j = \kappa _{\lambda _j} -
\kappa _{n,\lambda _j^{(n)} }$ and consider the associated Gramian
with entries $E_{jk} = \langle e_k,e_j\rangle$. Again we use
\begin{align*}
\|E\| _{\mathrm{op}} & \leq \mathrm{tr}\, E = \sum _{j=1}^L
\|e_j\|^2_{\fc } = \sum _{j=1}^L \|\kappa _{\lambda _j} -
  \kappa _{n,\lambda _j^{(n)} } \|^2 _{\fc } \\
&= 2 \sum _{j=1}^L   \Big(1-\mathrm{Re}\, \langle \kappa _{\lambda _j} ,
    \kappa _{n,\lambda _j^{(n)} }\rangle\Big) \, .
\end{align*}
 Consider a single term of this sum and write  $\lambda _j^{(n)} = \lambda $ and $\lambda _j= \mu
 $ for fixed $j$. Note that $|\lambda - \mu | \leq 1$ for $n$ large enough and that
 $\pi |\mu |^2 \leq N$ by the assumption that $\supp a \subseteq
 B_{\rho _N}$.
 Now
 \begin{align*}
\mathrm{Re} \,  \langle \kappa _{\lambda _j} ,
    \kappa _{n,\lambda _j^{(n)} } \rangle & = \mathrm{Re} \,\frac{k_n(\lambda,\mu )}{k_n(\lambda,\lambda)^{1/2} k(\mu,\mu)^{1/2}} \\   
&= e^{-\pi |\lambda -\mu |^2/2} \frac{\Gamma (n+1,\pi |\lambda |^2) +
\big[ \Gamma (n+1,\pi \lambda \bar{\mu}) -\Gamma (n+1,\pi |\lambda  |^2)\big]}
{ \Gamma(n+1,\pi |\lambda|^2)^{1/2} \, n!^{1/2}} \\
&= e^{-\pi |\lambda -\mu |^2/2} \frac{\Gamma (n+1,\pi |\lambda |^2)^{1/2}}{
 n!^{1/2}} + e(n,\lambda,\mu)\, .
 \end{align*}
Since 
$$
\frac{ \Gamma (n+1,\pi \lambda \bar{\mu}) -\Gamma (n+1,\pi |\lambda
  |^2))}{n!} \leq e^{-n\eta } \, 
$$
for some $\eta >0$ by Proposition~\ref{offd} and $\frac{\Gamma (n+1, \pi |\lambda |^2)}{n!}
\to 1$, the  term $e(n,\lambda ,\mu )$ tends to zero, as $n\to \infty
$. By a similar reasoning, as $n\to \infty $ and thus $\lambda =
\lambda _j^{(n)} \to \lambda _j= \mu $, we have
$$
1 - e^{-\pi |\lambda -\mu |^2/2} \frac{\Gamma (n+1,\pi |\lambda |^2)^{1/2}}{
  n!^{1/2}} \to 0
$$
for finitely many terms. Unraveling the notation, this means that
$\mathrm{tr} \, E \to 0$ and \eqref{eq:re6} is proved. 
\end{proof}

\begin{prop}
  There is no \mz\ family $(\Lambda _n)$ for $\pln $ in $\cF ^2$ with
  $\# \Lambda _n = n+1$.
\end{prop}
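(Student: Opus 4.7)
The plan is to show that a hypothetical \mz\ family with $\#\Lambda_n = n+1$ would automatically be a uniform interpolating family, and hence a common weak limit would be both sampling and interpolating for $\cF^2$, contradicting the Lyubarskii--Seip density theorems. The crux is a finite-dimensional observation: when $\#\Lambda_n = n+1 = \dim \pln$, the analysis operator $T_n \colon \pln \to \ell^2(\Lambda_n)$ defined by $T_n p = (p(\lambda)/k_n(\lambda,\lambda)^{1/2})_{\lambda \in \Lambda_n}$ is a square linear map, and the MZ inequalities~\eqref{eq:k1} read $A I \leq T_n^* T_n \leq B I$. This forces $T_n$ to be invertible, and since $T_n^*T_n$ and $T_n T_n^*$ then share the same spectrum, one obtains $A I \leq T_n T_n^* \leq B I$, which unfolds to
\[
A \|a\|_2^2 \leq \Bigl\| \sum_{\lambda \in \Lambda_n} a_\lambda \kappa_{n,\lambda} \Bigr\|_{\fc}^2 \leq B \|a\|_2^2 \, ,
\]
i.e.\ the uniform interpolation condition~\eqref{eq:re2} with the same constants $A,B$.

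Next, I would extract a single subsequence along which $(\Lambda_{n_k})$ converges weakly to some $\Lambda \subseteq \bC$. This is possible because Lemma~\ref{distr}(ii) provides uniform separation control on $\Lambda_n$ inside the bulk region, so a standard diagonal compactness argument applies. Theorem~\ref{mzsampa} applied along this subsequence yields that $\Lambda$ is a sampling set for $\cF^2$, while Theorem~\ref{mztoint} applied along the same subsequence (using the uniform interpolating property just established) yields that $\Lambda$ is also an interpolating set for $\cF^2$.

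Finally, by the density characterizations of Lyubarskii~\cite{lyub92} and Seip~\cite{seip92,seip-wallsten}, any sampling set for $\cF^2$ contains a uniformly discrete subset with $D^- > 1$, whereas any interpolating set is uniformly discrete with $D^+ < 1$. Since $D^-(\Lambda) \leq D^+(\Lambda)$ always, no set can simultaneously be both, contradicting the existence of the weak limit constructed above. The main obstacle is the first paragraph --- the finite-dimensional trick that upgrades a frame bound to a Riesz basis bound precisely when the cardinality of the index set matches the dimension of the underlying space --- but once it is in hand, the rest of the argument is an immediate assembly of Theorems~\ref{mzsampa} and~\ref{mztoint} with the classical Fock-space density theorems.
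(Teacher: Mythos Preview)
Your proposal is correct and follows essentially the same approach as the paper: both arguments observe that when $\#\Lambda_n = n+1 = \dim \pln$ the \mz\ family is automatically a uniform interpolating family with the same constants, then pass to a weak limit and invoke Theorems~\ref{mzsampa} and~\ref{mztoint} to obtain a set that is simultaneously sampling and interpolating for $\cF^2$, a contradiction. Your version is more explicit---you spell out the $T_n^*T_n$ versus $T_nT_n^*$ mechanism, justify the existence of a weak limit via Lemma~\ref{distr}(ii) and diagonal compactness, and derive the final contradiction from the density inequalities rather than citing~\cite[Lemma~6.2]{seip92} directly---but the logical skeleton is identical.
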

\begin{proof}
A set $\Lambda _n$ with $n+1$ points is both sampling and interpolating for $\pln $
with the same constants for  interpolation as for sampling. By
Theorem~\ref{mzsampa} 
any weak limit $\Lambda $ of a \mz\ family is a sampling set for $\cF ^2$, and
by Theorem~\ref{mztoint} $\Lambda $ is a set of interpolation for $\cF
^2$. This is a contradiction, since $\cF ^2$ does not admit any sets
that are simultaneously sampling and interpolating. See, e.g.,~\cite[Lemma~6.2]{seip92}. 
\end{proof}

\section{Gabor frames for subspaces spanned by Hermite functions}

By using the well-known connection between sampling in Fock space and
the theory of Gaussian Gabor frames we may rephrase the main results
 in the language of Gabor frames for subspaces.

Recall that the Bargman transform is defined to be
$$
Bf(z) = 2^{1/4} \int _{\bR } f(t) e^{2\pi zt -\pi t^2} \, dt \,\, e^{-\pi z^2/2}
$$
for  $z\in \bC $. It maps functions and distributions on $\bR $ to
entire functions.

We use the following properties of the Bargman transform. See e.g.,~\cite{folland89}.

(i) The  Bargman transform is unitary from $L^2(\bR )$ onto Fock space
$\cF ^2 $.

(ii) Let $\phi _z(t) = e^{-2\pi i y t} e^{-\pi (t-x)^2}$ denote the
\tfs\ of the Gaussian by $z=x+iy$. Then
$$
B\phi _z(w) = k_z(w) = e^{\pi \bar{z} w}
$$
is the reproducing kernel of $\cF ^2 $.

(iii) $B$ maps the normalized Hermite functions $h_k$, $$h_k(t) = c_k
e^{\pi t^2} \tfrac{d^k}{dt^k}(e^{-2\pi t^2}), \quad \|h_k\|_2=1, $$ to the
monomials $
 e_k(z) = \big( \frac{\pi ^k}{k!}\big)^{1/2}  \, z^k
 $. 
With the Bargman transform all questions about the spanning properties
of \tfs s $\phi _z$ of the Gaussian can be translated into questions
about the reproducing kernels $k_z$ in Fock space. For instance,
$\{\phi _\lambda : \lambda \in \Lambda \}$ is a frame for $L^2(\bR )$, \fif\ $\Lambda
$ is a sampling set for $\cF ^2$. Almost all statements about
Gaussian Gabor frames have been obtained via complex analysis methods,
notably the complete characterization of Gaussian Gabor frames  by
Lyubarski~\cite{lyub92} and Seip~\cite{seip92} and many subsequent
detailed investigations~\cite{ALS09,BBK20}.  
To this line of thought we add a statement about Gabor frames for
distinguished subspaces spanned by Hermite polynomials. Constructions
of this type have been used in signal processing~\cite{Hla98}. 

    \begin{tm}
      Assume that $\Lambda $ is a sampling set for $\cF ^2$, or
      equivalently  $\cG (h_0, \Lambda) = \{\phi _\lambda : \lambda\in \Lambda\} $ is 
Gabor frame in $L^2(\bR )$, then $\{\phi _\lambda : \pi |\lambda | ^2 \leq n+\sqrt{n}\tau \}$ is a
      frame for $V_n = \mathrm{span}\, \{h_k : k=0, \dots , n\}$ with
      bounds independent of $n$, i.e.,
      $$
   A\|f\|_2 \leq \sum _{\lambda \in \Lambda : \pi |\lambda |^2 \leq
     n+\sqrt{n}\tau } |\langle f, \phi _\lambda  \rangle |^2 \leq
   B\|f\|_2^2 \qquad \text{ for all } f\in V_n \, .
   $$
    \end{tm}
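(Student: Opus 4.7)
The plan is to transfer the statement to the Fock space via the Bargmann transform $B$, where it becomes a direct consequence of Theorem~\ref{samptomz} and Corollary~\ref{kernequ}. By property (iii) of $B$, the Hermite functions $h_0,\dots,h_n$ are mapped unitarily to the monomials $e_0,\dots,e_n$, so $B$ restricts to a unitary isomorphism from $V_n$ onto $\pln \subseteq \cF^2$. The standard identity relating the short-time Fourier transform with Gaussian window to the Bargmann transform gives, for every $f \in L^2(\bR)$ with $p = Bf$,
\[
  |\langle f, \phi_\lambda\rangle_{L^2(\bR)}|^2 \;=\; |Bf(\lambda)|^2 \, e^{-\pi|\lambda|^2} \;=\; \frac{|p(\lambda)|^2}{k(\lambda,\lambda)} \, .
\]
In particular, since $f \mapsto p = Bf$ is a unitary identification of $V_n$ with $\pln$, the proposed frame inequality on $V_n$ is equivalent to the discrete inequality
\[
  A \|p\|_{\cF^2}^2 \;\leq\; \sum_{\lambda \in \Lambda,\ \pi|\lambda|^2 \leq n+\sqrt{n}\tau} \frac{|p(\lambda)|^2}{k(\lambda,\lambda)} \;\leq\; B \|p\|_{\cF^2}^2 \qquad \text{for all } p \in \pln.
\]

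Next I would invoke Theorem~\ref{samptomz}: since $\Lambda$ is sampling for $\cF^2$, for $\tau$ large enough the finite layers $\Lambda_n := \Lambda \cap B_{\rho_n}$, with $\pi\rho_n^2 = n+\sqrt{n}\tau$, form a Marcinkiewicz--Zygmund family for $\pln$, i.e. there exist constants $A',B'$ independent of $n$ such that
\[
  A' \|p\|_{\cF^2}^2 \;\leq\; \sum_{\lambda \in \Lambda_n} \frac{|p(\lambda)|^2}{k_n(\lambda,\lambda)} \;\leq\; B' \|p\|_{\cF^2}^2 \qquad \text{for all } p \in \pln.
\]
This is almost the inequality needed above, except that the discrete weight involves the intrinsic kernel $k_n(\lambda,\lambda)$ rather than the Fock kernel $k(\lambda,\lambda) = e^{\pi|\lambda|^2}$.

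The bridge is Corollary~\ref{kernequ}: on precisely the index set $\Lambda_n$, i.e.\ where $\pi|\lambda|^2 \leq n+\sqrt{n}\tau$, one has the uniform two-sided comparison $k_n(\lambda,\lambda) \asymp k(\lambda,\lambda)$ with constants depending only on $\tau$. Substituting this into the Marcinkiewicz--Zygmund inequality converts $k_n$ into $k$ at the cost of absorbing a $\tau$-dependent factor into the constants, which yields the required discrete inequality for $p \in \pln$; pulling back through $B|_{V_n}$ then produces the frame bounds on $V_n$.

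There is no real obstacle in this argument — the theorem is essentially a packaging of the earlier results in Gabor-analytic language. The one point that needs to be handled cleanly is the translation dictionary, namely the identity $|\langle f,\phi_\lambda\rangle|^2 = |Bf(\lambda)|^2 e^{-\pi|\lambda|^2}$, which accounts for the Gaussian weight and makes the intrinsic kernel $k_n$ appearing in the Marcinkiewicz--Zygmund inequality interchangeable with $e^{\pi|\lambda|^2}$ on the bulk. Once that dictionary is in place, Theorem~\ref{samptomz} and Corollary~\ref{kernequ} give the result immediately and the constants $A,B$ depend only on the sampling constants of $\Lambda$ and on $\tau$, as claimed.
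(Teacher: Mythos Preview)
Your proposal is correct and follows essentially the same approach as the paper: the paper's proof is the single sentence ``The statement is equivalent to Theorem~\ref{samptomz} via the Bargmann transform.'' You have simply spelled out this equivalence in detail, including the explicit use of Corollary~\ref{kernequ} to pass between the intrinsic kernel $k_n(\lambda,\lambda)$ appearing in Theorem~\ref{samptomz} and the Fock weight $e^{\pi|\lambda|^2}$ needed for the Gabor formulation---a step the paper's one-line proof leaves implicit.
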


    \begin{proof}
The statement is equivalent to Theorem~\ref{samptomz} via the Bargman
transform.\footnote{Since $\phi _\lambda \not\in V_n$, some authors
  use the term ``pseudoframe'' for this situation.} 
    \end{proof}

\section{Appendix}

For completeness we present some elementary estimates for the zero
order asymptotics of the incomplete Gamma function that imply
Corollary~\ref{asympt1}.
Using Stirling's formula for $\big(\tfrac{n}{e}\big)^n
\sqrt{2\pi n}\leq n! \leq e \sqrt{n} \big(\tfrac{n}{e}\big)^n$,  we write
\begin{equation*}
  \frac{\Gamma (n+1, n+\sqrt{n}\tau )}{n!} \asymp \frac{1}{n!}\int
  _{n+\sqrt{n}\tau } ^\infty t^n e^{-t} \, dt \asymp
  \frac{1}{\sqrt{n}} \int
  _{n+\sqrt{n}\tau } ^\infty \Big(\frac{t}{n}\Big)^n e^{n-t} \, dt \, .
\end{equation*}
(The constants in the equivalence are in $[e\inv ,(2\pi )^{-1/2}]$.) 
Using the substitution $u=\frac{t}{n}-1$ we obtain
\begin{align}
    \frac{\Gamma (n+1, n+\sqrt{n}\tau )}{n!} &\asymp \sqrt{n} \int
    _{\tau / \sqrt{n}} ^\infty (1+u)^n e^{-nu}\, du \notag \\
  &= \sqrt{n} \int
    _{\tau / \sqrt{n}} ^\infty e^{ -n ( u - \ln (1+u))}\, du =
    \sqrt{n} \Big(  \int
    _{\tau / \sqrt{n}} ^1 \dots + \int _1^\infty \dots dt \Big) \, .  \label{incom2}
  \end{align}
  Using the 
inequality   $u-\ln (1+u)
  \geq (1-\ln 2) u$ valid for  $u\in [1,\infty )$, the latter
  integral is bounded by
  \begin{equation}
    \label{eq:eq:369}
  \sqrt{n} \int
    _1 ^\infty e^{ -n ( u - \ln (1+u))}\, du \leq \sqrt{n} \int
    _1 ^\infty e^{ -n (1-\ln 2)u}\, du \leq
  \frac{\sqrt{n}}{n(1-\ln 2)} = \cO \big(\frac{1}{\sqrt{n}} \big) \,  .
    \end{equation}
    In the first integral in \eqref{incom2} we use the power series of
  $\ln $ and obtain, for $u\in [0,1]$, 
  $$
  u-\ln (1+u) = \sum _{k=2}^\infty \frac{(-1)^k}{k} u^k \geq
  \frac{u^2}{2} - \frac{u^3}{3} \geq \frac{u^2}{6} \, .
  $$ Consequently,
  \begin{align*}
    \sqrt{n} \int _{\tau / \sqrt{n}} ^1 (1+u)^n e^{-nu}\, du &\leq
\sqrt{n} \int _{\tau / \sqrt{n}} ^1 e^{-n u^2/6} \, du \\
&= \sqrt{n} \sqrt{\frac{6}{n}} \int _{\tau / \sqrt{6}} ^{\sqrt{n/6}}
 e^{-v^2} \lesssim \mathrm{erf} (\tau / \sqrt{6}) \leq e^{-\tau ^2/6}
\, . 
  \end{align*}
  From $u-\ln (1+u) \leq u^2/2$ we obtain the lower bound
  \begin{align*}
   \sqrt{n} \int _{\tau / \sqrt{n}} ^1 (1+u)^n e^{-nu}\, du &\geq
\sqrt{n} \int _{\tau / \sqrt{n}} ^1 e^{-n u^2/2} \, du \\
&= \sqrt{n} \sqrt{\frac{2}{n}} \int _{\tau / \sqrt{2}} ^{\sqrt{n/2}}
 e^{-v^2}  \gtrsim  \mathrm{erf} (\tau / \sqrt{2})   \gtrsim
\frac{1}{\tau} e^{-\tau ^2/2} \, ,
  \end{align*}
  for $n$ large enough. Items (i) and (ii) of Proposition~\ref{asympt1}
  now follow easily from
  these  estimates. 

A weaker version of   item (v) follows by setting $\tau =0$ in the
above estimates. 
  
  \vspace{3mm}

(iii)   Similarly, 
\begin{align*}
 1-  \frac{\Gamma (n+1, n-\sqrt{n}\tau )}{n!} &\leq 
 \frac{1}{\sqrt{2\pi n}} \Big(\frac{e}{n} \Big)^n \int _0
  ^{n-\sqrt{n}\tau } t^n e^{-t} \, dt =\\
  &=\frac{1}{\sqrt{2\pi n}} \int _0
  ^{n-\sqrt{n}\tau } \Big(\frac{t}{n}\Big)^n e^{n-t} \, dt \, .
\end{align*}
With  the substitution $u=1-\frac{t}{n}$ we obtain
\begin{align}
&\frac{1}{\sqrt{2\pi n} } \int _0  ^{n-\sqrt{n}\tau } \Big(\frac{t}{n}\Big)^n 
e^{n-t} \, dt
= \frac{\sqrt{n}}{\sqrt{2\pi}} \int _{\tau/\sqrt{n}}^1 (1-u)^n e^{nu}\, du = 
\notag \\
&  = \frac{\sqrt{n}}{\sqrt{2\pi}} \int _{\tau/\sqrt{n}}
^1 e^{ n ( u +\ln (1-u))}\, du =
\frac{\sqrt{n}}{\sqrt{2\pi}} \Big(  \int   _{\tau/\sqrt{n}}
^{1/2} \dots  + \int _{1/2} ^1 \dots dt \Big)  \, .
\end{align}

Since $u+\ln (1-u) \leq 1/2 - \ln 2 < 0$ on the interval $[1/2,1]$,
the second term decays exponentially in $n$. For
the first term we use $u+\log (1-u) = - \sum _{k=2}^\infty \frac{u^k}{k}
\leq - u^2/2 $ and obtain
\begin{align*}
\frac{\sqrt{n}}{\sqrt{2\pi}} \int _{\tau/\sqrt{n}}
^{1/2} e^{ n ( u + \ln (1-u))}\, du &\leq \frac{\sqrt{n}}{\sqrt{2\pi}}
                                      \int _{\tau/\sqrt{n}}
^{1/2} e^{ -n u^2/2}\, du \\
&=  \frac{1}{\sqrt{2\pi}}\int _{\tau}
^{\sqrt{n}/2} e^{ -v^2/2}\, du \leq \tfrac{1}{2} e^{-\tau ^2/2} \, .  
\end{align*}
(iv) follows from (iii). If $n-\sqrt{n}\tau \geq x$, then
$$
\frac{\Gamma (n+1,x)}{n!} \geq \frac{\Gamma (n+1,n-\sqrt{n}\tau  )}{n!} \geq 1 - e^{-\tau
  ^2} \, .
$$
Since this holds arbitrary $\tau $ and $n$ large, we obtain $\lim
_{n\to \infty } \frac{\Gamma (n+1,x)}{n!} = 1$. 


\def\cprime{$'$} \def\cprime{$'$} \def\cprime{$'$} \def\cprime{$'$}
  \def\cprime{$'$} \def\cprime{$'$}

\def\cprime{$'$} \def\cprime{$'$} \def\cprime{$'$} \def\cprime{$'$}
  \def\cprime{$'$} \def\cprime{$'$}

\end{document}